\documentclass{cedram-alco}
\usepackage{breakurl}	
\usepackage{url}
\usepackage{xcolor}
\usepackage[ps,dvips,color,all]{xy}
\usepackage{upgreek}
\usepackage{pstricks}


\title
[Combinatorial, piecewise-linear, and birational homomesy]
{Combinatorial, piecewise-linear, and birational homomesy for products of two chains}

\author[\initial{D.} Einstein]{\firstname{David} \lastname{Einstein}}

\address{University of Massachusetts Lowell\\ Department of Mathematical Sciences}


\email{deinst@gmail.com}



\author[\initial{J.} Propp]{\firstname{James} \lastname{Propp}}
\address{University of Massachusetts Lowell\\ Department of Mathematical Sciences}

\urladdr{https://jamespropp.org}

\thanks{The authors were partially supported by NSF Grant \#1001905.}

\keywords{dynamics, homomesy, order ideal, order polytope, 
piecewise-linear, promotion, recombination, rowmotion, toggle group, tropicalization}

\subjclass{05E18, 06A07}

\newcommand{\cB}{{\mathcal{B}}}

\newcommand{\cF}{{\mathcal{F}}}

\newcommand{\cI}{{\mathcal{I}}}
\newcommand{\cK}{{\mathbb{K}}}
\newcommand{\cO}{{\mathcal{O}}}
\newcommand{\cP}{{\mathcal{P}}}

\newcommand{\row}{\rho}
\newcommand{\rowP}{\rho_{\cP}}
\newcommand{\rowH}{\overline{\rho}_{\cP}}
\newcommand{\rowB}{\rho_{\cB}}
\newcommand{\rowM}{\overline{\rho}_{\cB}}
\newcommand{\rowF}{\rho_{\cF}}
\newcommand{\pro}{\pi}
\newcommand{\proP}{\pi_{\cP}}
\newcommand{\proH}{\overline{\pi}_{\cP}}
\newcommand{\proB}{\pi_{\cB}}
\newcommand{\proM}{\overline{\pi}_{\cB}}

\newcommand{\C}{\mathbb{C}}
\newcommand{\N}{\mathbb{N}}

\newcommand{\R}{\mathbb{R}}
\newcommand{\Z}{\mathbb{Z}}
\newcommand{\sB}{\upalpha}
\newcommand{\sT}{\upomega}
\newcommand{\Sup}{S_{\rm up}}
\newcommand{\Sdown}{S_{\rm down}}
\newcommand{\psum}{{\:\ \mathclap{\|}\mathclap{-}\ \:}}
\newcommand{\covers}{\gtrdot}
\newcommand{\coveredby}{\lessdot}
\newcommand{\symmdiff}{\bigtriangleup}
\newcommand{\bigsig}{{\sum}}
\newcommand{\polytope}{K}
\newcommand{\mR}{\stackrel{\rho_\cB}{\mapsto}}
\newcommand{\mP}{\stackrel{\pi_\cB}{\mapsto}}
\newcommand{\nR}{\stackrel{\rho_\cP}{\mapsto}}
\newcommand{\nP}{\stackrel{\pi_\cP}{\mapsto}}
\newcommand{\recom}{\mathfrak{R}}
\newcommand{\decom}{\mathfrak{D}}

\newcommand{\bigsum}{\bigsig^+}
\newcommand{\bigpsum}{\bigsig^{\ \mathclap{\|}\mathclap{-}}\:}
\newcommand{\nh}{\ar@{-}[ru]}
\newcommand{\sh}{\ar@{-}[rd]}
\newcommand{\jim}{\!\!\!\mathfrak{R}\:\rotatebox[origin=c]{270}{$\mapsto$}}

\newcommand{\rFM}{\stackrel{\rowF}{\mapsto}}

\begin{document}
\begin{abstract}
This article illustrates the dynamical concept of {\em homomesy} in 
three kinds of dynamical systems -- combinatorial, piecewise-linear, and birational --
and shows the relationship between these three settings.  In particular, we show
how the rowmotion and promotion operations of Striker and Williams~\cite{SW12}
can be lifted to (continuous) piecewise-linear operations on the order polytope 
of Stanley~\cite{Sta86}, and then lifted to birational operations on the positive 
orthant in $\R^{|P|}$ and indeed to a dense subset of $\C^{|P|}$.
When the poset $P$ is a product of a chain of length $a$ and a chain of length $b$,
these lifted operations have order $a+b$, and exhibit the homomesy phenomenon:
the time-averages of various quantities are the same in all orbits.
One important tool is a concrete realization of the conjugacy between rowmotion 
and promotion found by Striker and Williams; this {\em recombination map}
allows us to use homomesy for promotion to deduce homomesy for rowmotion.
\end{abstract}

\maketitle

\begin{section}{Introduction}
\label{sec:intro}

Many authors \cite{BS74,CF95,Fon93,Pan09,SW12}
have studied an operation $\row$ on the set of order ideals of a poset $P$
that, following Striker and Williams, we call {\em rowmotion}.
In exploring the properties of rowmotion,
Striker and Williams also introduced and studied 
a closely related operation $\pro$ they call {\em promotion} 
on account of its ties with promotion of Young tableaux,
which depends on the choice of an {\em rc embedding}
(a particular kind of embedding of $P$ into the poset $\Z \times \Z$ 
that sharpens the idea of a Hasse diagram).
In this article (an expanded version of a 2014 FPSAC presentation~\cite{EP14})
we mostly focus on a very particular case,
where $P$ is of the form $[a] \times [b]$ 
and the rc embedding sends $(i,j) \in P$ to $(j-i,i+j-2) \in \Z^2$ 
(the standard Hasse embedding; see Figure~\ref{fig:rc}),
and we explore how the cardinality of an order ideal $I$ 
behaves as one iterates rowmotion and promotion.
Indeed, we find regularities for the average cardinality
of sets of the form $I \cap S_{\ell}$
as $I$ varies over the elements of a rowmotion-orbit or promotion-orbit,
where $\{S_1,S_2,\dots,S_{a+b-1}\}$ is a partition of $[a] \times [b]$
into special sets called {\em files}
(which Striker and Williams call columns).

Let $\cI(P)$ denote the set of order ideals of a poset $P$
(usually written as $J(P)$ in the literature).
It has long been known~\cite{BS74}
that the order of $\pro$ or $\row$
acting on $\cI([a] \times [b])$ is $a+b$.
Propp and Roby~\cite{PR13} showed 
that the average of $|I|$
as $I$ varies over an orbit in $\cI([a] \times [b])$ is $ab/2$,
and sketched a proof of a more detailed claim:

\begin{theo}
\label{thm:cardrefined}
Fix $a,b \geq 1$, let $n=a+b$, let $P = [a] \times [b]$,
and for $1 \leq \ell \leq n-1$
let $S_{\ell} = \{ (i,j) \in P \ | \ j-i+a=\ell \}$.
Then for every order ideal $I$ in $\cI(P)$,
$$\frac{1}{n} \sum_{k=0}^{n-1} |\pro^k (I) \cap S_{\ell}| = 
  \frac{1}{n} \sum_{k=0}^{n-1} |\row^k (I) \cap S_{\ell}| = 
\left\{ \begin{array}{ll}
b\ell/n & \mbox{if $\ell \leq a$}, \\ 
a(n-\ell)/n & \mbox{if $\ell \geq a$}.
\end{array} \right.$$
Summing over $\ell$, we obtain
$$\frac{1}{n} \sum_{k=0}^{n-1} |\pro^k (I)| = 
  \frac{1}{n} \sum_{k=0}^{n-1} |\row^k (I)| = ab/2.$$
\end{theo}
\noindent
(Here as elsewhere in the article, overlap between cases is intentional; 
it is easily checked that the answers given in borderline cases are consistent.)

It is no accident that the same averages are seen
for the promotion operation $\pi$ and the rowmotion operation $\row$;
the recombination principle discussed in Section~\ref{sec:recombine}
explains why we get same averages for both actions.
In some cases we will only state our results for promotion,
but in every case considered here (specifically, in 
Theorems~\ref{thm:sumrefined},~\ref{thm:sumhrefined},~\ref{thm:prodhrefined},
and~\ref{thm:prodrefined}) one may replace promotion by rowmotion 
without changing the common value of the orbit-averages.

The notion of looking at the average of a quantity over an orbit
was an outgrowth of the second author's work 
on chip-firing and rotor-routing~\cite{H+08,HP10};
see in particular Proposition 3 of~\cite{PR13}.
Further inspiration came from conjectures of Panyushev~\cite{Pan09}
(later proved by Armstrong, Stump, and Thomas~\cite{AST11}).

This article presents a new proof of Theorem~\ref{thm:cardrefined} 
(see Section~\ref{sec:tropic}) which,
although less direct than the Propp-Roby proof, 
indicates that the constant-averages-over-orbits phenomenon
(also called the homomesy phenomenon)
applies not just for actions on order ideals
but also for dynamical systems of a different sort. 
Specifically, we define (continuous) piecewise-linear maps
from the order polytope of $P$ to itself 
({\em piecewise-linear rowmotion} and {\em promotion}) 
that exhibit homomesy,
and birational maps from a dense open subset of $\C^{ab}$ to itself
({\em birational rowmotion} and {\em promotion}) 
that exhibit a multiplicative version of homomesy.
(See Subsection~\ref{subsec:homomesy} for definitions of these terms.)

Our main result is Theorem \ref{thm:prodrefined},
whose proof involves three main ingredients.
The first is the main result of Grinberg and Roby (Theorem 30 in~\cite{GR15}),
showing that birational rowmotion on the poset $[a] \times [b]$ has period $a+b$.
The second ingredient is the recombination operation 
explained in Section~\ref{sec:recombine}.
Recombination equivariantly takes birational rowmotion to birational promotion,
giving a concrete way of chopping up rowmotion-orbits 
and reassembling the pieces to obtain promotion-orbits, or vice versa;
the recombination picture tells us that 
birational promotion has the same orbit structure
(and hence the same period) as birational rowmotion.
Recombination has its roots in the work of Striker and Williams
(see Theorem 5.4 in~\cite{SW12}),
but the more detailed combinatorial picture presented here is required 
if we want to prove not just results about periodicity
but also results about homomesy.  
The third ingredient is Lemma~\ref{lem:swap} 
in which the specific nature of promotion
and the specific structure of the poset $[a] \times [b]$ play crucial roles.
The lemma concretizes and exploits the intuition that,
viewed from the correct perspective, promotion can be seen as a form of rotation,
but in a different manner than in the work of Grinberg-Roby.


The plan of the article is as follows.
In Section~\ref{sec:background}, after introducing needed preliminaries and notation,
including the definition of (additive) homomesy,
we review some of the background on the rowmotion and promotion operations 
$\row,\pro: \cI(P) \rightarrow \cI(P)$.
We then define (in Section~\ref{sec:PL})
piecewise-linear maps $\rowP,\proP: \R^{|P|} \rightarrow \R^{|P|}$
and show that $\rowP$ and $\proP$ specialize to $\row$ and $\pro$ 
if one restricts attention to the vertices of the order polytope $\cO(P)$
(replacing order ideals by filters as required).
Changing variables, we obtain slightly different piecewise-linear maps
$\rowH,\proH: \R^{|P|} \rightarrow \R^{|P|}$ that are 
homogeneous versions of $\rowP$, $\proP$.
Then we show (in Section~\ref{sec:birational})
how $\rowH$, $\proH$ can in turn each be viewed as
a tropicalization of a birational map $\rowM$, $\proM$
from a dense open subset $U$ of $\C^{ab}$ to itself;
we call the elements of $U$ {\em $P$-arrays}. 
In Section~\ref{sec:action}, invoking Grinberg and Roby's theorem 
about birational rowmotion and using the recombination method to show 
that birational promotion (like birational rowmotion) is of order $a+b$,
we give a proof of Theorem~\ref{thm:prodhrefined}:
for $v = (v_1,\dots,v_{ab}) \in U$,
the product of the coordinates of $v$ 
associated with elements of the file $S_{\ell}$ (denoted by $|v|_{\ell}$)
has the property that 
$|v|_{\ell} \, |\pro(v)|_{\ell} \,|\pro^2(v)|_{\ell} 
\,\cdots \,|\pro^{n-1}(v)|_{\ell} = 1$.
In Section~\ref{sec:recombine}, we describe recombination
and prove its basic properties.
In Section~\ref{sec:tropic}, we use tropicalization
to deduce from Theorem~\ref{thm:prodhrefined}
a piecewise-linear analogue (Theorem~\ref{thm:sumhrefined})
that by an affine change of variables yields the homomesy result 
for the action of promotion on $\cO(P)$ (Theorem~\ref{thm:sumrefined}).
This last result then yields homomesy for 
the action of promotion on $\cI(P)$ (Theorem~\ref{thm:cardrefined}).
That is, ignoring the use of recombination for passing
back and forth between rowmotion and promotion,
the logic of the argument is that we first prove birational homomesy,
we then deduce piecewise-linear homomesy by tropicalization,
and we finally deduce combinatorial homomesy by specialization.
For completeness, in Section~\ref{sec:opposites}
we use the reciprocity principle of Grinberg-Roby~\cite{GR15}
to prove that the function that sends $f$ to $f(x) f(x')$ 
with $x = (i,j)$ and $x' = (a+1-i,b+1-j)$ is multiplicatively homomesic.

The philosophy of lifting combinatorial actions
to piecewise-linear actions and thence to birational actions
(called ``geometric actions'' by some authors,
as in the phrase ``geometric Robinson-Schensted-Knuth'')
is not original, 
and in particular Kirillov and Berenstein's work 
on operations on Gelfand-Tsetlin patterns~\cite{KB95}
has some parallels with our constructions.
For more background on homomesy,
including several examples different in nature
from the ones considered here but philosophically similar, see~\cite{PR13}.

The authors are grateful to Arkady Berenstein, Darij Grinberg, Michael Joseph, 
Tom Roby, Richard Stanley, and Jessica Striker for helpful conversations
and detailed comments on the manuscript.

\end{section}

\begin{section}{Background}
\label{sec:background}

\begin{subsection}{Homomesy}
\label{subsec:homomesy}

Given a set $X$, an operation $T : X \rightarrow X$ of finite order $n$
(so that $T^n$ is the identity map on $X$),
and a function $F$ from $X$ to a field $\cK$ of characteristic 0, 
we say that $F$ is additively {\em homomesic} 
relative to (or under the action of) $T$, 
or that the triple $(X,T,F)$ exhibits additive {\em homomesy}, 
if, for all $x \in X$, the average of $F$ on the $T$-orbit of $x$
equals some constant $c$ (independent of $x$);
that is, if $(F(x)+F(T(x))+F(T^2(x))+\cdots+F(T^{n-1}(x)))/n = c$.
We also say in this situation that the function $F$ 
(which in this context we will sometimes call a {\em statistic} on $X$)
is $c$-{\em mesic} relative to the map $T$.
We will apply this notion in situations
where $T$ is piecewise-linear (or when $X$ is finite).

In situations where $T$ is birational,
we will use a multiplicative analogue of this notion.
If $F$ is positive throughout $X$,
then we say $F$ is multiplicatively homomesic if
its geometric mean is the same on every orbit.
More generally, $F$ is multiplicatively homomesic if
$(F(x)\,F(T(x))\,F(T^2(x))\,\cdots\,F(T^{n-1}(x)))^{1/n}$
is independent of $x$.
(In some settings it is more natural to relax
the assumption of positivity and merely assert that
$F(x)\,F(T(x))\,F(T^2(x))\,\cdots\,F(T^{n-1}(x))$
is independent of $x$, but we will not take that course here.)
Way $F$ is multiplicatively $c$-mesic if
the geometric mean of $F$ on every orbit is $c$.

We will usually omit the qualifiers ``additive'' and ``multiplicative'',
since the context should make clear which meaning is intended
(additive homomesy in the combinatorial and piecewise-linear realms,
multiplicative homomesy in the birational realm).

\end{subsection}

\begin{subsection}{Posets and toggling}
\label{subsec:posets}

We assume readers are familiar with 
the definition of a finite poset $(P, \leq)$,
as for instance given in Ch.\ 3 of~\cite{Sta11}.
For the most part, we are studying the case $P = [a] \times [b] = 
\{(i,j) \in \N \times \N: \ 1 \leq i \leq a, \ 1 \leq j \leq b\}$
with ordering defined by $(i,j) \leq (i',j')$ 
iff $i \leq i'$ and $j \leq j'$.  We put $n=a+b$. 

We write $x \coveredby y$ (``$x$ is covered by $y$'')
or equivalently $y \covers x$ (``$y$ covers $x$'') 
when $x < y$ and no $z \in P$ satisfies $x < z < y$.
We call $S \subseteq P$ a {\em filter} 
(or {\em upset} or {\em dual order ideal}) of $P$
when $x \in S$ and $y \geq x$ imply $y \in S$.
We call $S \subseteq P$ an {\em order ideal} (or {\em ideal} or {\em downset}) of $P$
when $x \in S$ and $y \leq x$ imply $y \in S$.
The set of filters (resp.\ order ideals) of $P$ 
is denoted by $\cF(P)$ (resp.\ $\cI(P)$).

Following Cameron and Fon-Der-Flaass~\cite{CF95}
and Striker and Williams~\cite{SW12}
we define {\em toggle} operations on $\cF(P)$ and $\cI(P)$.
We treat $\cI(P)$ first.

Given $x \in P$ and $I \in \cI(P)$, let $\tau_x(I)$ 
(``$I$ toggled at $x$'' in Striker and Williams' terminology)
denote the set $I \symmdiff \{x\}$ if this set is in $\cI(P)$ and $I$ otherwise
(where $X \symmdiff Y$ denotes the symmetric difference of the sets $X$ and $Y$).
Equivalently, $\tau_x(I)$ is $I$
unless $y \in I$ for all $y \coveredby x$
and $y \not\in I$ for all $y \covers x$,
in which case $\tau_x(I)$ is $I \symmdiff \{x\}$.
(We will sometimes say ``toggling $x$ turns $I$ into $\tau_x(I)$.'')
Clearly $\tau_x$ is an involution.
It is also easy to show that $\tau_x$ and $\tau_y$ commute
unless $x \coveredby y$ or $x \covers y$.
Cameron and Fon-Der-Flaass proved that
if $x_1,x_2,\dots,x_{|P|}$ is any {\em linear extension} of $P$
(that is, a listing of the elements of $P$ such that
$x_i < x_j$ implies $i<j$), then the composition 
$\tau_{x_1} \circ \tau_{x_2} \circ \cdots \circ \tau_{x_{|P|}}$
(``toggling from top to bottom'') 
is independent of the choice of linear ordering;
we denote it by $\row$.
In the case where the poset $P$ is graded
(that is, where the elements can be partitioned into integer-indexed ranks 
such that $x \coveredby y$ implies that the rank of $x$ is 1 less than the rank of $y$),
one natural way to linearly extend $P$ is to list the elements by rank,
starting with the lowest rank and working upward.
Given the right-to-left order of composition of
$\tau_{x_1} \circ \tau_{x_2} \circ \cdots \circ \tau_{x_{|P|}}$,
this corresponds to toggling the top rank first,
then the next-to-top rank, and so on, lastly toggling the bottom rank.
Note that when $x,y$ belong to the same rank of $P$,
the toggle operations $\tau_x$ and $\tau_y$ commute,
so even without availing ourselves of the theorem of Cameron and Fon-Der-Flaass
we can see that this composite operation on $\cI(P)$ 
(``toggling by ranks from top to bottom'') is well-defined.
Striker and Williams, in their theory of rc posets, 
use the term ``row'' as a synonym for ``rank'',
and they refer to $\row$ as {\em rowmotion}.

For example, let $P = [2] \times [2]$,
and write $(1,1),(2,1),(1,2),(2,2)$ as $w,x,y,z$ for short,
with $w < x < z$ and $w < y < z$ in $P$,
and with the rc embedding shown in Figure~\ref{fig:2x2}.
Under the action of $\tau_z$, $\tau_y$, $\tau_x$, and $\tau_w$,
the order ideal $\{w,x\}$ gets successively mapped to
$\{w,x\}$, $\{w,x,y\}$, $\{w,y\}$, and $\{w,y\}$.
Thus $\row(\{w,x\}) = \{w,y\}$.
\begin{figure}[ht] 
\[
\xymatrixrowsep{0.8pc}\xymatrixcolsep{0.60pc}\xymatrix{
&           & z \sh & \\
& x \nh \sh &       & y \\
&           & w \nh &                        &
}
\]
\caption{The poset $[2] \times [2]$.}
\label{fig:2x2}
\end{figure}

\begin{figure}[ht]
\begin{center}
\begin{pspicture}(-4,-1)(4,7)
\psline[linestyle=dotted](0,0)(4,0)
\psline[linestyle=dotted](-1,1)(4,1)
\psline[linestyle=dotted](-2,2)(4,2)
\psline[linestyle=dotted](-1,3)(4,3)
\psline[linestyle=dotted](0,4)(4,4)
\psline[linestyle=dotted](1,5)(4,5)
\psline[linestyle=dotted](-2,2)(-2,6)
\psline[linestyle=dotted](-1,1)(-1,6)
\psline[linestyle=dotted](0,0)(0,6)
\psline[linestyle=dotted](1,1)(1,6)
\psline[linestyle=dotted](2,2)(2,6)
\psline[linestyle=dotted](3,3)(3,6)
\rput(-3,6.5){{\rm files:}}
\rput(-2,6.5){{1}}
\rput(-1,6.5){{2}}
\rput(0,6.5){{3}}
\rput(1,6.5){{4}}
\rput(2,6.5){{5}}
\rput(3,6.5){{6}}
\rput(4.2,6){{\rm ranks:}}
\rput(4.5,5){{6}}
\rput(4.5,4){{5}}
\rput(4.5,3){{4}}
\rput(4.5,2){{3}}
\rput(4.5,1){{2}}
\rput(4.5,0){{1}}
\psline(-2,2)(-1,1)
\psline(-2,2)(-1,3)
\psline(-1,1)(0,0)
\psline(-1,1)(0,2)
\psline(-1,3)(0,2)
\psline(-1,3)(0,4)
\psline(0,0)(1,1)
\psline(0,2)(1,1)
\psline(0,2)(1,3)
\psline(0,4)(1,3)
\psline(0,4)(1,5)
\psline(1,1)(2,2)
\psline(1,3)(2,2)
\psline(1,3)(2,4)
\psline(1,5)(2,4)
\psline(2,2)(3,3)
\psline(2,4)(3,3)
\pscircle[fillstyle=solid,fillcolor=white](-2,2){.15}
\pscircle[fillstyle=solid,fillcolor=white](-1,1){.15}
\pscircle[fillstyle=solid,fillcolor=white](-1,3){.15}
\pscircle[fillstyle=solid,fillcolor=white](0,0){.15}
\pscircle[fillstyle=solid,fillcolor=white](0,2){.15}
\pscircle[fillstyle=solid,fillcolor=white](0,4){.15}
\pscircle[fillstyle=solid,fillcolor=white](1,1){.15}
\pscircle[fillstyle=solid,fillcolor=white](1,3){.15}
\pscircle[fillstyle=solid,fillcolor=white](1,5){.15}
\pscircle[fillstyle=solid,fillcolor=white](2,2){.15}
\pscircle[fillstyle=solid,fillcolor=white](2,4){.15}
\pscircle[fillstyle=solid,fillcolor=white](3,3){.15}
\rput(-2.02,1.4){{(3,1)}}
\rput(-1.02,0.4){{(2,1)}}
\rput(-1.02,2.4){{(3,2)}}
\rput(-.02,-.6){{(1,1)}}
\rput(-.02,1.4){{(2,2)}}
\rput(-.02,3.4){{(3,3)}}
\rput(0.98,0.4){{(1,2)}}
\rput(0.98,2.4){{(2,3)}}
\rput(0.98,4.4){{(3,4)}}
\rput(1.98,1.4){{(1,3)}}
\rput(1.98,3.4){{(2,4)}}
\rput(2.98,2.4){{(1,4)}}
\end{pspicture}
\end{center}
\caption{The standard rc embedding of the poset $P = [3] \times [4]$,
and the associated partitions of $P$ into ranks and files.  Diagonal
edges are associated with the covering relation in $P$.}
\label{fig:rc}
\end{figure}

We will not go into the general theory of rc posets,
as most of our work concerns 
the special case of rc posets of the form $[a] \times [b]$.
We define the {\em rank} of $(i,j) \in [a] \times [b]$ to be $i+j-1$,
so that in particular the bottom element $(1,1)$ has rank 1
and the top element $(a,b)$ has rank $a+b-1=n-1$
(later we will introduce an extension of $P$
whose bottom element $\hat{0}$ has rank 0
and whose top element $\hat{1}$ has rank $n$).
As an rc poset $[a] \times [b]$ admits an embedding in the plane
that maps $(i,j)$ to the point $(j-i,i+j-2)$;
all poset-elements of rank $m$
are mapped into the horizontal line at height $m-1$ above the origin.
We refer to elements of $P$ that lie on a common vertical line
as belonging to the same {\em file}.
In particular, we say $(i,j)$ belongs to the 
$(j-i+a)$th\footnote{Note that 
as $i$ ranges from $1$ to $a$ and $j$ ranges from $1$ to $b$, 
$j-i+a$ ranges from $1$ to $a+b-1=n-1$; 
this is slightly different from 
the indexing in~\cite{PR13}.} file of $P$.
See Figure~\ref{fig:rc}.
Note that if $x$ and $y$ belong to the same file,
the toggle operations $\tau_x$ and $\tau_y$ commute,
since neither of $x,y$ can cover the other.
Thus the composite operation of toggling the elements of $P$
from left to right is well-defined;
Striker and Williams call this operation {\em promotion},
and we denote it by $\pro$.

We have explained rowmotion and promotion
as toggling by ranks (from top to bottom) 
and by files (from left to right).
A different way to understand these operations
is by toggling {\em fibers}\footnote{We prefer
to avoid words like ``row'', ``column'' and ``diagonal''
since each of these words has two different meanings
according to whether one imbeds $[a] \times [b]$ as an rc poset
or as a subposet of the first quadrant.}
of $[a] \times [b]$,
that is, sets of the form 
$[a] \times \{j\}$ or $\{i\} \times [b]$.
(This view of rowmotion and promotion goes back to Striker and Williams;
see the proof of Theorem 5.4 in ~\cite{SW12}.)
We refer to fibers as being positive or negative
according to whether their slope in the rc embedding
is $+1$ or $-1$.

We illustrate these ideas in the context of $P = [3] \times [3]$.
We consider two orders (both linear extensions of $P$)
in which one can toggle all the elements of $P$ to obtain $\row$.
The first order is just the standard order for rowmotion 
(from top to bottom rank by rank, from left to right within each rank).
The second order toggles the elements in the topmost positive fiber from top to bottom,
then the elements in the middle positive fiber from tom to bottom,
and then the elements in the bottommost positive fiber from top to bottom.
Since the element of $P$ marked $3$ in the left frame
neither covers nor is covered by the element of $P$ marked $4$,
the associated toggles commute, and the same goes for the two toggles
associated with the elements of $P$ marked $6$ and $7$.
Hence the composite operation on the left (``rowmotion by ranks'')
coincides with the composite operation on the right (``rowmotion by fibers'').
This holds for all $a,b$ by the theorem of Cameron and Fon-Der-Flaass~\cite{CF95}.
\begin{figure}[ht]
\begin{center}
\[
\xymatrixrowsep{0.8pc}\xymatrixcolsep{0.80pc}\xymatrix{
   &   & 1 \sh &   &   &       &   &   & 1 \sh &   &   \\
   & 2 \nh \sh &   & 3 \sh &   &       &   & 2 \nh \sh &   & 4 \sh &   \\ 
 4 \nh \sh &   & 5 \nh \sh &   & 6 & \ \ \ & 3 \nh \sh &   & 5 \nh \sh &   & 7 \\
   & 7 \nh \sh &   & 8 \nh &   &       &   & 6 \nh \sh &   & 8 \nh &   \\
   &   & 9 \nh &   &   &       &   &   & 9 \nh &   &  
}
\]
\end{center}
\caption{Rowmotion by ranks and fibers.}
\label{fig:rrf}
\end{figure}

There is a similar picture for promotion.
Here we consider two other orders (not linear extensions of $P$)
in which one can toggle all the elements of $P$.
The first is just the standard order for promotion 
(from left to right file by file, from top to bottom within each file).
The second toggles 
the elements in the topmost positive fiber from left to right, 
then the elements in the middle positive fiber from left to right, 
and then the elements in the bottommost positive fiber from left to right.
As before, the 3 and the 4 can be swapped,
as can the 6 and the 7.
Hence the composite operation on the left (``promotion by files'')
coincides with the composite operation on the right (``promotion by fibers'').
\begin{figure}[ht]
\begin{center}
\[
\xymatrixrowsep{0.8pc}\xymatrixcolsep{0.80pc}\xymatrix{
   &   & 4 \sh &   &   &       &   &   & 3 \sh &   &   \\
   & 2 \nh \sh &   & 7 \sh &   &       &   & 2 \nh \sh &   & 6 \sh &   \\ 
 1 \nh \sh &   & 5 \nh \sh &   & 9 & \ \ \ & 1 \nh \sh &   & 5 \nh \sh &   & 9 \\
   & 3 \nh \sh &   & 8 \nh &   &       &   & 4 \nh \sh &   & 8 \nh &   \\
   &   & 6 \nh &   &   &       &   &   & 7 \nh &   &  
}
\]
\end{center}
\caption{Promotion by ranks and fibers.}
\label{fig:prf}
\end{figure}
Note that for both rowmotion and promotion
we divide the poset into positive fibers and toggle them from top to bottom;
the only difference is whether we toggle the elements within each fiber
from left to right (promotion) or right to left (rowmotion).
The proof that promotion by files and promotion by fibers coincide
for all $a,b$ uses the same commutation results as in the case of rowmotion;
it is just a matter of turning the picture on its side.

So far we have limited ourselves to toggling, rowmotion, and promotion of order ideals;
the definitions for filters are very similar.
Given $x \in P$ and $F \in \cF(P)$, let $\tau_x(F)$ 
be $F \symmdiff \{x\}$ if this set is in $\cF(P)$ and $F$ otherwise.
Rowmotion is defined as toggling from top to bottom
and promotion is defined as toggling from left to right.
When we wish to be ultra-clear about context
we may write order-ideal rowmotion and promotion as $\row_{\cI}$ and $\pro_{\cI}$
and write filter rowmotion and promotion as $\row_{\cF}$ and $\pro_{\cF}$,
but often we will omit the subscripts.

Figure~\ref{fig:filters} shows what $\row_{\cF}$ looks like
as a map on the six-element set $\cF$
(we represent each filter by its indicator function).
\begin{figure}[ht]
\begin{center}
\[
\xymatrixrowsep{0.6pc}\xymatrixcolsep{0.70pc}\xymatrix{
          &          & 1 \sh&     &     &           & 1 \sh &   &     &           & 1 \sh &   &     &           & 0 \sh &   &     &    \\
|\!|\!\!: & 1 \nh\sh &      &  1  & \rFM & 1 \nh \sh &       & 1 & \rFM & 0 \nh \sh &       & 0 & \rFM & 0 \nh \sh &       & 0 & \rFM & :\!\!|\!| \\
          &          & 1 \nh&     &     &           & 0 \nh &   &     &           & 0 \nh &   &     &           & 0 \nh &   &     &
          & & & & & & & & & & & & & & & & & \\
          & & & & & & & & & & & & & & & & & \\
          &          &      &     &     &           & 1 \sh &   &     &           & 1 \sh &   &     &           &       &   &     &    \\
  & & &       & |\!|\!\!:                & 1 \nh \sh &       & 0 & \rFM & 0 \nh \sh &       & 1 & \rFM & :\!\!|\!| &       &   &     &    \\
          &          &      &     &     &           & 0 \nh &   &     &           & 0 \nh &   &     &           &       &   &     &
          }
\]
\end{center}
\caption{The orbits of $\rowF$ on the filters of $[2] \times [2]$.}
\label{fig:filters}
\end{figure}
The matching $|\!|\!\!:$ and $\,:\!\!|\!|$ symbols, borrowed from music notation,
indicate the orbit structure; applying the map $\rho$
to the last listed filter (preceding the $\,:\!\!|\!|$)
brings us back to the first (following the $|\!|\!\!:$).

\end{subsection}

\end{section}

\begin{section}{Piecewise-linear toggling}
\label{sec:PL}


Given a poset $P$ with elements $x_1,\dots,x_{|P|}$, 
let $\R^P$ denote the set of functions $f : P \rightarrow \R$;
we can represent such an $f$
as a {\em $P$-array} (or {\em array} for short)
in which the values of $f(x)$ (for all $x \in P$) are arranged
according to a Hasse diagram for $P$.
We will assume that the elements of $P$
are listed in order of rank (from lowest to highest),
and when $P$ has an rc embedding,
we will assume for convenience that the elements within each rank
are listed from left to right.
We will sometimes identify $\R^P$ with $\R^{|P|}$,
associating $f \in \R^P$ with $v = (f(x_1),\dots,f(x_{|P|}))$.
(We will use ``$f$'' when we want to view $P$-arrays 
as functions that take arguments and return values,
and ``$v$'' when we want to view $P$-arrays
as points in Euclidean space.)
Let $\hat{P}$ denote the augmented poset obtained from $P$ 
by adding two extra elements $\hat{0}$ and $\hat{1}$ 
(which we sometimes denote by $x_0$ and $x_{|P|+1}$)
satisfying $\hat{0} < x < \hat{1}$ for all $x \in P$. 
The {\em order polytope} $\cO(P) \subset \R^P$ 
(see~\cite{Sta86})
is the set of vectors $(\hat{f}(x_1),\dots,\hat{f}(x_{|P|}))$ 
arising from functions $\hat{f} : \hat{P} \rightarrow \R$ 
that satisfy $\hat{f}(\hat{0}) = 0$ and $\hat{f}(\hat{1}) = 1$ 
and are {\em order-preserving}
($x \leq y$ in $\hat{P}$ implies $\hat{f}(x) \leq \hat{f}(y)$ in $\R$). 
Note that the vertices of the order polytope
are the indicator functions of filters.
In some cases it is better to work with the augmented vector 
$(\hat{f}(x_0),\hat{f}(x_1),\dots,\hat{f}(x_{|P|}),\hat{f}(x_{|P|+1}))$.
In either case we have a convex compact polytope.

For example, if $P = [2] \times [2] = \{w,x,y,z\}$ 
as depicted in Subsection~\ref{subsec:posets},
so that $(x_1,x_2,x_3,x_4)=(w,x,y,z)$,
then $\cO(P)$ is the set of vectors $v=(v_1,v_2,v_3,v_4) \in \R^4$ 
such that $0 \leq v_1$, $v_1 \leq v_2 \leq v_4$, 
$v_1 \leq v_3 \leq v_4$, and $v_4 \leq 1$.
It can also be written as the convex hull of the vectors
$(0,0,0,0)$, $(0,0,0,1)$, $(0,0,1,1)$, 
$(0,1,0,1)$, $(0,1,1,1)$, and $(1,1,1,1)$,
which are precisely the vectors associated with the filters of $P$.
It is shown in~\cite{Sta86} that for any poset $P$,
the vertices of $\cO(P)$ correspond to the indicator functions
of the filters of $P$.


We begin our discussion of toggling informally; for formal definitions, see 
Definitions~\ref{def:PLtoggle},~\ref{def:PLrow}, and~\ref{def:PLpro}.
Given a convex compact polytope $\polytope$ in $\R^{|P|}$
(we are only concerned with the case $\polytope=\cO(P)$ here
but the definition makes sense more generally),
we define the {\em piecewise-linear toggle operation} 
$\phi_i$ ($1 \leq i \leq |P|$)
as the unique map from $\polytope$ to itself
whose action on the 1-dimensional cross-sections of $\polytope$ 
in the $i$th coordinate direction
is the affine map that switches the two endpoints of the cross-section.
(We sometimes call these cross-sections fibers, 
e.g.~in our use of the term fiber-toggle below,
though this use of the word ``fiber''
should not be confused with the use found in \ref{subsec:posets}.)
That is, given $v = (v_1,\dots,v_{|P|}) \in \polytope$,
we define 
\begin{equation}
\label{eq:phiv}
\phi_i(v) = (v_1,\dots,v_{i-1},L_i(v)+R_i(v)-v_i,v_{i+1},\dots,v_{|P|}),
\end{equation}
where the real numbers $L_i(v)$ and $R_i(v)$ (usually $L$ and $R$ for short)
are respectively the left and right endpoints of the set 
$\{t \in \R: \ (v_1,\dots,v_{i-1},t,v_{i+1},\dots,v_{|P|})
\in \polytope\}$,
which is a bounded interval because $K$ is 
convex and compact.\footnote{Note that $L$ and $R$ depend on
$v_1,\dots,v_{i-1},v_{i+1},\dots,v_{|P|}$,
though our notation suppresses this dependence.
It is in some ways unfortunate that for an interval $[a,b]$,
the greatest lower bound $a$ (\resp least upper bound $b$)
is called the left endpoint (\resp right endpoint)
rather than the lower endpoint (\resp upper endpoint);
in the Hasse diagram of $P$ as we have drawn it,
$L$ should be thought as being associated with the downward direction,
while $R$ should be thought as being associated with the upward direction.}
Since $L+R-(L+R-v_i)=v_i$, each toggle operation is an involution.

Similar involutions were studied by Kirillov and Berenstein~\cite{KB95}
in the context of Gelfand-Tsetlin triangles.
Indeed, one can view their action as an instance of our framework,
where instead of looking at the rectangle posets $[a] \times [b]$
one looks at the triangle posets
with elements $\{(i,j): \ 1 \leq i \leq j \leq n\}$
and covering-relations $(i,j-1) \coveredby (i,j)$ ($1 \leq i \leq j \leq n$)
and $(i+1,j+1) \coveredby (i,j)$ ($1 \leq i \leq j \leq n-1$);
Kirillov and Berenstein, in their Definition 0.1, 
use the term ``elementary transformations''
for what we call ``fiber-toggles''.

In the case where $\polytope$ is the order polytope of $P$,
and a particular element $x \in P$ has been indexed as $x_i$,
we write $\phi_i$ as $\phi_x$. 
The $L$ and $R$ that appear in (\ref{eq:phiv})
are given by
\begin{equation}
\label{eq:L}
L = \max \{v_j: \ 0 \leq j \leq |P|+1, \ x_j \coveredby x_i\}
\end{equation}
and
\begin{equation}
\label{eq:R}
R = \min \{v_j: \ 0 \leq j \leq |P|+1, \ x_j \covers x_i\} .
\end{equation}
Using these definitions of $L$ and $R$,\footnote{For all
$v$ in $\cO(P)$ one also has 
$L = \max \{v_j : x_j < x_i\}$ 
and 
$R = \min \{v_j : x_j > x_i\}$, 
but the formulas (\ref{eq:L}) and (\ref{eq:R}) 
turn out to be the right ones to use 
in the complement of the order polytope in $\R^P$,
as well as the right ones vis-a-vis lifting
the action to the birational setting.}
we see that equation (\ref{eq:phiv}) defines 
an involution on all of $\R^P$, not just $\cO(P)$.
It is easy to show that $\phi_x$ and $\phi_y$ commute
unless $x \coveredby y$ or $x \covers y$.
These piecewise-linear toggle operations $\phi_x$ are analogous to
the combinatorial toggle operations $\tau_x$ 
(and indeed $\phi_x$ generalizes $\tau_x$ in a sense to be made precise below),
so it is natural to define 
piecewise-linear rowmotion $\rowP: \cO(P) \rightarrow \cO(P)$
as the composite operation accomplished by toggling from top to bottom
(much as filter rowmotion $\row_{\cF}: \cF(P) \rightarrow \cF(P)$
can be defined as the composite operation obtained by
toggling from top to bottom).  
Likewise, if $P$ comes equipped with an rc embedding
(as is the case for $P = [a] \times [b]$), we can define
piecewise-linear promotion $\proP: \cO(P) \rightarrow \cO(P)$
as the composite operation accomplished by toggling from left to right.

Continuing the example $P = [2] \times [2] = \{w,x,y,z\}$
from Subsection~\ref{subsec:posets},
let $v = (.1,.2,.3,.4) \in \cO(P)$,
corresponding to the order-preserving function $f$
that maps $w,x,y,z$ to $.1,.2,.3,.4$, respectively.
Under the action of $\phi_z$, $\phi_y$, $\phi_x$, and $\phi_w$,
the vector $v = (.1,.2,.3,.4)$ gets successively mapped to
\begin{eqnarray*}
\phi_z v & = & (.1,\ .2,\ .3,\ \max(.2,.3)+1-.4) \\
& = & (.1,\ .2,\ .3,\ .9), \\
\phi_y \phi_z v & = & (.1,\ .2,\ .1+.9-.3,\ .9) \\
& = & (.1,\ .2,\ .7,\ .9), \\
\phi_x \phi_y \phi_z v & = & (.1,\ .1+.9-.2,\ .7,\ .9)  \\
& = & (.1,\ .8,\ .7,\ .9), \ \mbox{and} \\
\phi_w \phi_x \phi_y \phi_z v & = & (0+\min(.8,.7)-.1,\ .8,\ .7,\ .9) \\
& = & (.6,\ .8,\ .7,\ .9) \\
& = & \rowP v. 
\end{eqnarray*}

Other examples of piecewise-linear rowmotion
can be seen in Figure~\ref{fig:filters}
(since combinatorial rowmotion of filters
is just the special case of piecewise-linear rowmotion
in which all entries in the $P$-array are equal to 0 or 1,
and the associated filter is the preimage of the value 1).

Grinberg and Roby~\cite{GR15} 
have shown that $\rowP$ on $P = [a] \times [b]$ is of order $n = a+b$,
and by applying recombination (see Section~\ref{sec:recombine})
we will conclude that $\proP$ is of order $n$ as well.

The vertices of $\cO(P)$ are precisely the 0,1-valued functions $f$ on $P$
with the property that $x \leq y$ in $P$ implies $f(x) \leq f(y)$ in $\{0,1\}$.
Each toggle operation acts as a permutation on 
the set of vertices of $\cO(P)$. 
Indeed, if we think of each vertex $\cO(P)$
as determining a cut of the poset $P$
into an upset (filter) $\Sup$ and a downset (order ideal) $\Sdown$
(the pre-image of 1 and 0, respectively,
under the order-preserving map from $P$ to $\{0,1\}$),
then the effect of the toggle operation $\phi_x$ ($x \in P$)
is just to move $x$ from $\Sup$ to $\Sdown$ (if $x$ is in $\Sup$)
or from $\Sdown$ to $\Sup$ (if $x$ is in $\Sdown$)
unless this would violate the property
that $\Sup$ must remain an upset and $\Sdown$ must remain a downset.
In particular, we can see that when our point $v \in \cO(P)$ is a vertex
associated with the cut $(\Sup,\Sdown)$,
the effect of $\phi_x$ on $\Sdown$ is just
toggling the order ideal $\Sdown$ at the element 
$x \in P$.\footnote{This point of view is quite similar to
the monotone Boolean functions point of view
seen in the original literature on what is now called rowmotion.}

It is not hard to show that each toggle operation preserves 
$\min \{f(y) - f(x):$ $x,y \in \hat{P}, \, x \coveredby y\}$.
Therefore $\rowP$ (and $\proP$, when $P$ comes with an rc embedding)
preserve this quantity as well.

In our formal definitions we generalize the above construction
by allowing $f(\hat{0})$ and $f(\hat{1})$
to have fixed values in $\R$ other than 0 and 1 respectively.

\begin{defi} \label{def:PLtoggle}
Suppose $P$ is a poset with $\hat{P}=P \cup \{\hat{0},\hat{1}\}$,
and fix $\sB,\sT$ in $\R$.
We view each $f \in \mathbb{R}^{P}$
as an element of $f \in \mathbb{R}^{\hat{P}}$
by setting $f(\hat{0})=\sB$ and $f(\hat{1})=\sT$.
For $x \in P$,
define $\phi_x f$ as the unique element of $\R^P$ such that
$(\phi_x f)(y) = f(y)$ for all $y \neq x$ in $P$ and
$$(\phi_x f)(x)  
= \max \{f(y): y \in \hat{P}, \ y \coveredby x\}
+ \min \{f(y): y \in \hat{P}, \ y \covers x\} - f(x).$$
\end{defi}

\noindent
(Note that the sets in Definition~\ref{def:PLtoggle}
are guaranteed to be nonempty,
so that the max and min are well-defined.)

\begin{defi} \label{def:PLrow}
With $P$ and $\hat{P}$ as in Definition~\ref{def:PLtoggle},
and for $f$ in $\R^P$,
let $\rowP(f)$ be the element of $\R^P$
obtained by applying to $f$, in succession,
the toggle operations $\phi_{|P|}$,\dots,$\phi_{2}$,$\phi_{1}$, 
where $x_1,x_2,\dots,x_{|P|}$ is some linear extension of $P$
and $\phi_i = \phi_{x_i}$.
An easy adaptation of the proof of Cameron and Fon-Der-Flaass~\cite{CF95}
shows that this operation is independent of the linear ordering.
\end{defi}

For example, returning to the example $P = [2] \times [2] = \{w,x,y,z\}$
from Subsection~\ref{subsec:posets}, let $v = (.1,.2,.3,.4) \in \cO(P)$ as before,
but now set $\alpha=1$ and $\omega=0$ (rather than $\alpha=0$ and $\omega=1$).
Under the action of $\phi_z$, $\phi_y$, $\phi_x$, and $\phi_w$,
the vector $v = (.1,.2,.3,.4)$ gets successively mapped to
\begin{eqnarray*}
\phi_z v & = & (.1,\ .2,\ .3,\ \max(.2,.3)+0-.4) \\
& = & (.1,\ .2,\ .3,\ -.1), \\
\phi_y \phi_z v & = & (.1,\ .2,\ .1+(-.1)-.3,\ -.1) \\
& = & (.1,\ .2,\ -.3,\ -.1), \\
\phi_x \phi_y \phi_z v & = & (.1,\ .1+(-.1)-.2,\ -.3,\ -.1)  \\
& = & (.1,\ -.2,\ -.3,\ -.1), \ \mbox{and} \\
\phi_w \phi_x \phi_y \phi_z v & = & (1+\min(-.2,-.3)-.1,\ -.2,\ -.3,\ -.1) \\
& = & (.6,\ -.2,\ -.3,\ -.1) \\
& = & \rowP v.
\end{eqnarray*}

\begin{defi} \label{def:PLpro}
Take $P$ and $\hat{P}$ as in Definition~\ref{def:PLtoggle},
and suppose we are given an rc embedding of $P$.
For $f$ in $\R^P$,
let $\proP(f)$ be the element of $\R^P$
obtained by applying to $f$, in succession,
the toggle operations from left to right.
\end{defi}

We call $\sB = \sT = 0$ the {\em homogeneous case}
and call $\sB = 0$, $\sT = 1$ the {\em order polytope case}.
In the homogeneous case, we write the rowmotion operation as $\rowH$
(and likewise we write the homogeneous piecewise-linear
promotion operation as $\proH$
when $P$ comes with an rc embedding).
We can always translate by $\alpha$
(replacing $f(x)$ by $f(x)-\alpha$ for all $x \in P$)
to reduce to the case $\alpha = 0$.
In the case where $P$ is graded,
the maps $\rowP$ and $\rowH$ are related
by an affine change of variables,
as are the maps $\proP$ and $\proH$ in the rc case.
Suppose that $\hat{P}$ has $r+1$ ranks, numbered 0 (bottom) through $r$ (top).
Given an arbitrary $f$ in $\R^P$,
define $\widetilde{f}(x) = f(x) - \frac{r-m}{r}\sB - \frac{m}{r}\sT$
for $x$ belonging to rank $m$ ($0 \leq m \leq r$).
Then $\widetilde{f}$ sends $\hat{0}$ and $\hat{1}$ to 0,
and each function from $\hat{P}$ to $\R$
that sends $\hat{0}$ and $\hat{1}$ to 0
arises as $\widetilde{f}$ for a unique $f$ in $\R^P$.
Furthermore, the map $f \mapsto \widetilde{f}$
commutes with rowmotion,\footnote{More generally, 
this way of relating $\rowP$ and $\rowH$
works whenever the poset $P$ is graded.}
so every homomesy for homogeneous rowmotion
gives rise to a (rank-adjusted) homomesy
for order-polytope rowmotion, and vice versa;
the same goes for promotion in the rc case.

Using the homogenization/dehomogenization trick 
one can show that the following two theorems are equivalent:

\begin{theo}
\label{thm:sumrefined}
Fix $a,b \geq 1$, let $n=a+b$, let $P = [a] \times [b]$,
for $1 \leq \ell \leq n-1$
let $|v|_\ell = \sum_x f(x)$
where $x$ ranges over all $(i,j) \in [a] \times [b]$
satisfying $j-i+a=\ell$,
and let $|v|$ be $\sum_{\ell=1}^{n-1} |v|_{\ell}$, 
the sum of all the entries of $v$.
Take $\sB = 0$, $\sT = 1$.
Then for every $v$ in $\R^P$,
and for each $k$ between 1 and $n-1$,
$$\frac{1}{n} \sum_{k=0}^{n-1} |\proP^k (v)|_\ell = 
\left\{ \begin{array}{ll}
a\ell/n & \mbox{if $\ell \leq b$}, \\ 
b(n-\ell)/n & \mbox{if $\ell \geq b$}. \end{array} \right.$$
Summing over $\ell$, we obtain
$$ \frac{1}{n} \sum_{k=0}^{n-1} |\proP^k (v)| = ab/2.$$
\end{theo}
\noindent
(The reason we have different formulas
in Theorems~\ref{thm:cardrefined} and \ref{thm:sumrefined}
is that the former concerns promotion of order ideals
while the latter concerns promotion in the order polytope.
If we replaced order ideals by filters
in the statement of Theorem~\ref{thm:cardrefined},
the formula for the orbit-average would become
what we see in Theorem~\ref{thm:sumrefined}.)

\begin{theo}
\label{thm:sumhrefined}
Fix $a,b \geq 1$, let $n=a+b$, let $P = [a] \times [b]$,
for $1 \leq \ell \leq n-1$
let $|v|_\ell = \sum_x f(x)$
where $x$ ranges over all $(i,j) \in [a] \times [b]$
satisfying $j-i+a=\ell$,
and let $|v|$ be $\sum_{\ell=1}^{n-1} |v|_{\ell}$, 
the sum of all the entries of $v$.
Take $\sB = \sT = 0$.
Then for every $v$ in $\R^P$,
and for each $k$ between 1 and $n-1$,
$$\sum_{k=0}^{n-1} |\proH^k (v)|_\ell = 0.$$
Summing over $\ell$, we obtain
$$\sum_{k=0}^{n-1} |\proH^k (v)| = 0.$$
\end{theo}

We will obtain both Theorem~\ref{thm:sumrefined}
and Theorem~\ref{thm:sumhrefined} 
as consequences of Theorem~\ref{thm:prodhrefined}.
(We do not state a fully general version of 
Theorem~\ref{thm:sumrefined}
with arbitrary values of $\sB$ and $\sT$,
but it is easy to obtain such a result by tropicalizing Theorem~\ref{thm:prodrefined}.)

Note that if $a$ or $b$ is 1,
so that $[a] \times [b]$ is a chain with $n-1$ elements,
then our piecewise-linear maps are all affine,
and if we set $v_0 = v_n = 0$,
then the effect of the map $\phi_i$ ($1 \leq i \leq n-1$) 
is to swap the $i$th and $i+1$st elements of
the difference-vector $(v_1-v_0,v_2-v_1,\dots,v_{n}-v_{n-1})$,
a vector of length $n$ whose entries sum to 0.
Consequently $\proH$ is a cyclic shift of a vector
whose entries sum to 0,
and the claim of Theorem~\ref{thm:sumhrefined} follows in this special case.

It might be possible to prove Theorem~\ref{thm:sumrefined} in the general case
by figuring out how the map $\proP$ dissects the order polytope into pieces
and re-arranges them via affine maps.
Likewise, it might be possible to prove Theorem~\ref{thm:sumhrefined} 
by giving a precise analysis 
of the piecewise-linear structure of the map $\proH$.
However, we will take a different approach,
proving the result in the piecewise-linear setting
by proving it in the birational setting and then tropicalizing.

\end{section}

\begin{section}{Birational toggling}
\label{sec:birational}

The definition of the toggling operation
involves only addition, subtraction, min, and max.
One can define birational transformations on $(\R^+)^P$
that have some formal resemblance to the toggle operations on $\cO(P)$.
This transfer makes use of a dictionary in which 
0, addition, subtraction, max, and min are respectively replaced by 
1, multiplication, division, addition, and parallel addition (defined below),
resulting in a subtraction-free rational expression.\footnote{The 
authors are indebted to Arkady Berenstein for pointing out
the details of this transfer of structure
from the piecewise-linear setting to the birational setting. 
A key signpost pointing in the correct direction 
was his remark that $\min(x,y) = -\max(-x,-y)$.}
Parallel addition can be expressed in terms of the other operations,
but taking a symmetrical view of the two forms of addition 
turns out to be fruitful.
Indeed, in setting up the correspondence we have a choice to make:
by series-parallel duality,
one could equally well use a dictionary 
that switches the roles of addition and parallel addition.

For $x,y$ satisfying $x+y \neq 0$, we define 
the parallel sum of $x$ and $y$ as $x \psum y = xy/(x+y)$.
In the case where $x$, $y$ and $x+y$ are all nonzero,
$xy/(x+y)$ is equal to $1/(\frac1x+\frac1y)$,
which clarifies the choice of notation and terminology:
if two electrical resistors of resistance $x$ and $y$ are connected in parallel,
the resulting circuit element has effective resistance $x \psum y$.
Note that if $x$ and $y$ are in $\R^+$, 
then $x+y$ and $x \psum y$ are in $\R^+$ as well.
Also note that $\!\psum\!$ is commutative and associative,
so that a compound parallel sum $x \psum y \psum z \psum \cdots$
is well-defined; it is equal to product $x y z \cdots$
divided by the sum of all products that omit exactly one of the variables,
and in the case where $x,y,z,\dots$ are all nonzero,
it can also be written as $1/(\frac1x+\frac1y+\frac1z+\cdots)$.
We thus have the reciprocity relation
\begin{equation}
\label{eq:reciprocity}
\left( x \psum y \psum z \psum \cdots \right) 
\left( \frac{1}{x}+\frac{1}{y}+\frac{1}{z}+\cdots \right) = 1.
\end{equation}
The identity 
\begin{equation}
\label{eq:duality}
(x \psum y) (x+y) = xy
\end{equation}
will also play an important role.

As in the previous section, we begin informally,
preparing for Definitions~\ref{def:btoggle},~\ref{def:brow}, and~\ref{def:bpro}.
Recall formulas (\ref{eq:phiv}), (\ref{eq:L}) and (\ref{eq:R}) above.
Instead of taking the maximum of the $v_j$'s satisfying $x_j \coveredby x_i$,
we take their ordinary (or ``series'') sum,
and instead of taking the minimum of the $v_j$'s 
satisfying $x_j \covers x_i$,
we take their parallel sum.
Proceeding formally, given a nonempty multiset $S = \{s_1,s_2,\dots\}$,
let $\bigsum S$ denote $s_1 + s_2 + \cdots$ 
and $\bigpsum S$ denote $s_1 \psum s_2 \psum \cdots$.
(To see why we must think of $S$ as a multiset
and take multiplicity into account,
consider for instance the series sum and parallel sum
in equations (\ref{eq:Lb}) and (\ref{eq:Rb}) respectively;
if we happen to have $v_{j_1} = v_{j_2}$
for some pair $j_1 \neq j_2$, both of the terms must be included.)
Then for $v = (v_0,v_1,\dots,v_{|P|},v_{|P|+1}) \in (\R^+)^{\hat{P}}$ 
with $v_0 = \alpha = 1$ and $v_{|P|+1} = \omega = 1$
(recall that $x_0 = \hat{0}$ and $x_{|P|+1} = \hat{1}$)
and for $1 \leq i \leq |P|$ we take
\begin{equation}
\label{eq:phib}
\phi_i(v) = (v_0,v_1,\dots,v_{i-1},LR/v_i,v_{i+1},\dots,v_{|P|},v_{|P|+1}),
\end{equation}
with
\begin{equation}
\label{eq:Lb}
L = \bigsum \{v_j: \ 0 \leq j \leq |P|+1, \ x_j \coveredby x_i\}
\end{equation}
and
\begin{equation}
\label{eq:Rb}
R = \bigpsum \{v_j: \ 0 \leq j \leq |P|+1, \ x_j \covers x_i\}
\end{equation}
where as in Section~\ref{sec:PL}
the sets in question are guaranteed to be nonempty.
We call the maps $\phi_i$ given by (\ref{eq:phib}) 
{\em birational toggle operations},
as opposed to the piecewise-linear toggle operations
treated in the previous section.\footnote{We use 
the same symbol $\phi_i$ for both,
but this should cause no confusion.}
As the $0$th and $|P|+1$st coordinates of $v$ 
are not affected by any of the toggle operations
we can just omit those coordinates,
reducing our toggle operations to actions on $(\R^+)^P$.
Since $LR/(LR/v_i) = v_i$,
each birational toggle operation is an involution
on the orthant $(\R^+)^{P}$.
The birational toggle operations are analogous to
the piecewise-linear toggle operations
(in a sense to be made precise below),
so it is natural to define 
{\em birational rowmotion} $\rowB: (\R^+)^P \rightarrow (\R^+)^P$
as the composite operation accomplished by toggling from top to bottom,
and to define
{\em birational promotion} $\proB: (\R^+)^P \rightarrow (\R^+)^P$
as the composite operation accomplished by toggling from left to right in the rc case.

It is not hard to show that 
each birational toggle operation preserves 
$\bigsum \{f(x) / f(y): \: x,y \in \hat{P}, \, x \coveredby y\}$
(or, if one prefers, the reciprocal quantity
$\bigpsum \{f(y) / f(x): \: x,y \in \hat{P}, \, x \coveredby y\}$).
Therefore $\rowB$ also preserves this quantity,
as does $\proB$ in the rc case.
(We are indebted to Arkady Berenstein for this observation.)
One could define the {\em birational toggle group}
as the group generated by all birational toggles,
and the {\em piecewise-linear toggle group} analogously.
These are related to the (combinatorial) ``toggle group''
earlier authors have studied (the group induced by
the action of all the toggles on $\cI(P)$),
but unlike the combinatorial toggle group,
these groups are in general infinite.
It seems possible that, at least in some cases,
the birational toggle group contains
all birational transformations that preserve
$\bigsum \{f(x) / f(y): \: x,y \in \hat{P}, \, x \coveredby y\}$.

Continuing our running example $P = [2] \times [2] = \{w,x,y,z\}$,
let $v = (1,2,3,4) \in (\R^+)^P$,
corresponding to the positive function $f$
that maps $w,x,y,z$ to $1,2,3,4$, respectively.
Under the action of $\phi_z$, $\phi_y$, $\phi_x$, and $\phi_w$,
the vector $v = (1,\:2,\:3,\:4)$ gets successively mapped to
\begin{eqnarray*}
\phi_z v & = & (1,\ 2,\ 3,\ (2+3)(1)/4) \\
& = & (1,\ 2,\ 3,\ {\textstyle \frac{5}{4}}), \\
\phi_y \phi_z v & = & (1,\ 2,\ (1)({\textstyle \frac{5}{4}})/3,\ {\textstyle \frac{5}{4}}) \\
& = & (1,\ 2,\ {\textstyle \frac{5}{12}},\ {\textstyle \frac{5}{4}}), \\
\phi_x \phi_y \phi_z v & = & (1,\ (1)({\textstyle \frac{5}{4}})/2,\ {\textstyle \frac{5}{12}},\ {\textstyle \frac{5}{4}})  \\
& = & (1,\ {\textstyle \frac{5}{8}},\ {\textstyle \frac{5}{12}},\ {\textstyle \frac{5}{4}}), \ \mbox{and} \\
\phi_w \phi_x \phi_y \phi_z & = & ((1)({\textstyle \frac{5}{8}} \psum {\textstyle \frac{5}{12}})/1,\ {\textstyle \frac{5}{8}},\ {\textstyle \frac{5}{12}},\ {\textstyle \frac{5}{4}}) \\
& = & ({\textstyle \frac{1}{4}},\ {\textstyle \frac{5}{8}},\ {\textstyle \frac{5}{12}},\ {\textstyle \frac{5}{4}}) \\
& = & \rowB(v).
\end{eqnarray*}
We can check that the quantity 
$\bigsum \{f(x) / f(y): \: x,y \in \hat{P}, x \coveredby y\}$
retains the value $\frac{85}{12}$ throughout the process. 

For most of the purposes of this article,  
it suffices to take $\proB$ to be a map from $(\R^+)^P$ to itself;
since the variables take on only positive values
and since the expressions in those variables are all subtraction-free,
all of the denominators are non-zero,
so none of the rational functions blow up.
Alternatively, as in the work of Grinberg and Roby,
one can replace $\R^+$ by
a ring of rational functions in formal indeterminates
indexed by the elements of $P$,
thereby avoiding the singularity issue.
A third approach is to extend $\rowB$ and $\proB$ to maps
from a dense open subset $U$ of $\C^P$ to itself
by avoiding the points where denominators vanish.
This is slightly subtle, since one needs to exclude
all points whose orbits intersect the singular set.
That is, we need to avoid not merely those points
where the map blows up, but also points 
where the $k$th iterate of the map blows up.
When the map is of finite order,
this means avoiding a finite union of Zariski-closed proper subsets.
It should be possible to characterize 
the resulting dense open set $U$,
but we do not do this here;
we merely point out that such a dense open set must exist
since there are only finitely many denominators.

We now give formal definitions.
Fix $\alpha$, $\omega$ in $\R^+$.

\begin{defi} \label{def:btoggle}
Suppose $P$ is a poset with $\hat{P}=P \cup \{\hat{0},\hat{1}\}$.
For $f$ in $(\R^+)^P$
(extended to an element of $(\R^+)^{\hat{P}}$
by putting $f(\hat{0})=\alpha$ and $f(\hat{1})=\omega$)
and for $x \in P$,
define $\phi_x f$ as the unique element of $(\R^+)^P$ such that
$(\phi_x f)(y) = f(y)$ for all $y \neq x$ in $P$ and
$$(\phi_x f)(x)
= \left( \bigsum \{f(y): y\!\in\!\hat{P}, \ y \coveredby x\} \right)
\!\left( \bigpsum \{f(y): y\!\in\!\hat{P}, \ y \covers x\} \right) / f(x).$$
\end{defi}

\noindent
(Note that the sets in Definition~\ref{def:btoggle}
are guaranteed to be nonempty,
so that the sum and parallel sum are well-defined.)

\begin{defi} \label{def:brow}
With $P$ and $\hat{P}$ as in Definition~\ref{def:btoggle},
and for $f$ in $(\R^+)^P$,
let $\rowB(f)$ be the element of $(\R^+)^P$
obtained by applying to $f$, in succession,
the toggle operations $\phi_{|P|}$,\dots,$\phi_{2}$,$\phi_{1}$,
where $x_1,x_2,\dots,x_{|P|}$ is some linear extension of $P$
and $\phi_i = \phi_{x_i}$.
An easy adaptation of the proof of Cameron and Fon-Der-Flaass~\cite{CF95}
shows that this operation is independent of the linear ordering.
\end{defi}

\begin{defi} \label{def:bpro}
Take $P$ and $\hat{P}$ as in Definition~\ref{def:btoggle},
and suppose we are given an rc embedding of $P$.
For $f$ in $(\R^+)^P$,
let $\proB(f)$ be the element of $(\R^+)^P$
obtained by applying to $f$, in succession,
the toggle operations from left to right.
\end{defi}

We focus mostly on the {\em monic} case
$f(\hat{0}) = f(\hat{1}) = 1$, since
(just as in the piecewise-linear setting) no generality is lost,
if $P$ is graded
and we restrict to vectors $v$ in the positive orthant
so that $r$th roots are globally well-defined,
where $r+1$ is the number of ranks of $\hat{P}$.
Given an arbitrary $f: \hat{P} \rightarrow \R^+$,
let $\alpha = f(\hat{0})$
and $\omega = f(\hat{1})$,
and define $\widetilde{f}$ by 
$\widetilde{f}(x) = f(x) / (\alpha^{1-m/r} \omega^{m/r})$
for $x$ belonging to rank $m$ ($0 \leq m \leq r$).
Then $\widetilde{f}$ sends $\hat{0}$ and $\hat{1}$ to 1,
and for any choice of $\alpha,\omega \in \R^+$,
each function from $\hat{P}$ to $\R^+$
that sends $\hat{0}$ and $\hat{1}$ to 1
arises as $\widetilde{f}$ for a unique $f : \hat{P} \rightarrow \R^+$
sending $\hat{0}$ to $\alpha$ and $\hat{1}$ to $\omega$.
Furthermore, the map $f \mapsto \widetilde{f}$ commutes with rowmotion 
(and with promotion in the rc case),
so homomesy results for $\widetilde{f}$
yield homomesy results for $f$ as immediate consequences.
When we wish to emphasize that we are in
the monic case $\alpha = \omega = 1$,
we will write $\rowB$ and $\proB$ as $\rowM$ and $\proM$.

As in the piecewise-linear case,
our proof-method will enable us to demonstrate
(multiplicative) homomesy of the action
when $P$ is a product of two chains.

\begin{theo}
\label{thm:prodhrefined}
Fix $a,b \geq 1$, let $n=a+b$, let $P = [a] \times [b]$,
for $1 \leq \ell \leq n-1$
let $|v|_\ell = \prod_x f(x)$
where $x$ ranges over all $(i,j) \in [a] \times [b]$
satisfying $j-i+a=\ell$,
and let $|v|$ be $\prod_{\ell=1}^{n-1} |v|_{\ell}$, 
the product of all the entries of $v$.
Then for every $v$ in $(\R^+)^P$
corresponding to an $f: \hat{P} \rightarrow \R^+$
with $f(\hat{0})=f(\hat{1})=1$,
$$\prod_{k=0}^{n-1} |\proM^k (v)|_\ell = 1.$$
Multiplying over $\ell$, we obtain
$$\prod_{k=0}^{n-1} |\proM^k (v)| = 1.$$
\end{theo}

In other words, the geometric mean of the values of $|\proM(v)|_{\ell}$
as $v$ traces out an orbit in $(\R^+)^P$ is equal to 1 for every orbit.
Theorem~\ref{thm:prodhrefined} also applies to a dense open subset of $\R^P$,
and indeed to a dense open subset of $\C^P$,
but the paraphrase in terms of geometric means does not hold in general
since $z \mapsto z^{1/n}$ is not single-valued on $\C$.

We will derive Theorem~\ref{thm:prodhrefined} from Theorem~\ref{thm:prodrefined}.

\end{section}

\begin{section}{File-toggling and promotion}
\label{sec:action}

In this section we assume $P = [a] \times [b]$.
We work in the birational realm,
though we switch to considering promotion rather than rowmotion
for reasons that will soon be clear.
Earlier we defined $S_{\ell}$, 
the $\ell$th file in $[a] \times [b]$ (with $1 \leq \ell \leq n-1$),
as the set of all $(i,j) \in [a] \times [b]$ with $j-i+a=\ell$.

We start in the monic case ($\alpha=\omega=1$),
though later in the section we will consider
the general not-necessarily-monic case.
Given $f: \hat{P} \rightarrow \R^+$
with $f(\hat{0}) = f(\hat{1}) = 1$,
let $p_\ell=|v|_\ell$ ($1 \leq \ell \leq n-1$) 
be the product of the numbers $f(x)$
with $x$ belonging to the $\ell$th file of $P$,
let $p_0 = p_n = 1$,
and for $1 \leq \ell \leq n$ let $q_\ell = p_\ell/p_{\ell-1}$.
Call $q_1,\dots,q_n$ the {\em quotient sequence} associated with $f$,
and denote it by $Q(f)$.
This is analogous to the difference sequence introduced in~\cite{PR13}.
Note that the product $q_1 \cdots q_n$ telescopes to $p_n/p_0=1$.
For $\ell$ between 1 and $n-1$,
let $\phi_\ell^*$ be the product of the commuting involutions $\phi_x$
for all $x$ belonging to the $\ell$th file.
Lastly, given a sequence of $n$ numbers $w = (w_1,\dots,w_n)$,
and given $1 \leq \ell \leq n-1$, define 
$\sigma_\ell (w) = (w_1,\dots,w_{\ell-1},w_{\ell+1},w_{\ell},w_{\ell+2},\dots,w_n)$;
that is, $\sigma_\ell$ switches the $\ell$th and $\ell+1$st entries of $w$.

\begin{lemma}
\label{lem:swap}
For all $1 \leq \ell \leq n-1$,
$$Q(\phi_\ell^* f) = \sigma_\ell Q(f).$$
That is, toggling the $\ell$th file of $f$
swaps the $\ell$th and $\ell+1$st entries
of the quotient sequence of $f$.
\end{lemma}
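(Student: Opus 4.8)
The plan is to reduce the lemma to a purely local computation about what the file-toggle $\phi_i^*$ does to the products $p_{i-1}$, $p_i$, $p_{i+1}$, leaving all other $p_j$ (and hence all $q_j$ with $j \neq i, i+1$) fixed. First I would observe that $\phi_i^*$ only changes the values $f(x)$ for $x$ in the $i$th file, so for $j \notin \{i-1,i,i+1\}$ the product $p_j$ is unchanged; moreover $p_{i-1}$ and $p_{i+1}$ are also unchanged, since those files are disjoint from the $i$th file. Hence $q_j = p_j/p_{j-1}$ is unchanged except possibly for $j = i$ and $j = i+1$, and since $q_1 \cdots q_n = 1$ always holds, it suffices to show that the product $q_i q_{i+1} = p_{i+1}/p_{i-1}$ is preserved (which is automatic) \emph{and} that $\phi_i^*$ swaps $q_i$ and $q_{i+1}$, i.e.\ that $p_i$ is transformed into $p_{i-1} p_{i+1} / p_i$.

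So the crux is the claim: if $p_i' $ denotes the product of the toggled values over the $i$th file, then $p_i' = p_{i-1} p_{i+1} / p_i$. Here is where I would use the structure of the rc embedding of $[a] \times [b]$. The key geometric fact is that in this poset the elements of the $i$th file form a chain, alternating in rank with the elements of the neighboring files: each $x$ in file $i$ has its lower covers $x^-$ lying in files $i-1$ and $i+1$, and its upper covers $x^+$ lying in files $i-1$ and $i+1$ as well, and as $x$ ranges over file $i$ these covers, taken with multiplicity, exactly exhaust files $i-1$ and $i+1$ (together with the appropriate copies of $\hat{0}$ and $\hat{1}$, whose $f$-values are $1$). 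More precisely, since $\phi_x(v)$ replaces $v_x$ by $L_x R_x / v_x$ with $L_x = \bigsig^{+}\{v_y : y \in x^-\}$ and $R_x = \bigsig^{\psum}\{v_y : y \in x^+\}$, and since the various $x$ in a common file commute (their $L$'s and $R$'s involve only values in adjacent files, which are not themselves retoggled), we get
$$p_i' = \prod_{x \text{ in file } i} \frac{L_x R_x}{f(x)} = \frac{1}{p_i} \Big(\prod_{x} L_x\Big)\Big(\prod_{x} R_x\Big).$$
I would then argue that $\prod_x L_x$ and $\prod_x R_x$ together telescope (over the chain that is file $i$) to $p_{i-1} p_{i+1}$: each $L_x$ is a \emph{sum} of $f$-values in the two adjacent files, each $R_x$ is a \emph{parallel sum} (reciprocal-of-sum-of-reciprocals) of $f$-values in the two adjacent files, and because the poset is the rectangle, consecutive elements of file $i$ share covers in a pattern where the ``sum'' and ``parallel-sum'' cancel via identity~(\ref{eq:duality}), $(x \psum y)(x+y) = xy$. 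Concretely, if the file-$i$ elements in rank order have lower-cover-sets and upper-cover-sets overlapping in single elements of files $i\pm1$, pairing each $L_x$ with the $R$ that shares the same neighbor element, the product collapses to the product of \emph{all} $f$-values appearing in files $i-1$ and $i+1$, namely $p_{i-1} p_{i+1}$.

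The main obstacle — and the step I would spend the most care on — is making this telescoping bookkeeping precise and uniform: one must track exactly which elements of files $i-1$ and $i+1$ appear in which $x^-$ and $x^+$ as $x$ runs up file $i$, handle the boundary cases where a cover is $\hat{0}$ or $\hat{1}$ (contributing a factor $1$), and verify that every $f$-value of files $i\pm1$ is counted exactly once overall after the $(x \psum y)(x+y) = xy$ cancellations. I expect this is cleanest to organize by setting up explicit coordinates for the rc embedding (file index $j-i+a$, rank $i+j-2$) so that file $i$ is literally a path and its neighbors interleave with it, and then doing the telescoping as an induction up the file. Once $p_i' = p_{i-1}p_{i+1}/p_i$ is established, the lemma follows immediately: $q_i' = p_i'/p_{i-1} = p_{i+1}/p_i = q_{i+1}$ and $q_{i+1}' = p_{i+1}/p_i' = p_i/p_{i-1} = q_i$, while all other $q_j$ are unchanged, which is exactly $Q(\phi_i^* f) = \sigma_i Q(f)$.
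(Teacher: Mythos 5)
Your proposal is correct and follows essentially the same route as the paper's proof: reduce to the single identity $p_i p_i' = p_{i-1}p_{i+1}$, write $p_i p_i'$ as $\prod_x L_x R_x$ over the $i$th file, and telescope using the fact that consecutive elements of a file share their intermediate covers together with the identity $(x \psum y)(x+y) = xy$, with the leftover boundary factors being single values from files $i\pm 1$ (or $\hat{0}$, $\hat{1}$). The bookkeeping you flag as the delicate step is handled in the paper at the same level of detail ("by examining cases"), so nothing essential is missing.
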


\begin{proof}
Let $f'=\phi_\ell^* f$,
let $p'_\ell$ ($1 \leq \ell \leq n-1$) be the product of the numbers $f'(x)$
with $x$ belonging to the $\ell$th file of $P$,
let $p'_0 = p'_n = 1$,
and for $1 \leq \ell \leq n$ let $q'_\ell = p'_\ell/p'_{\ell-1}$.
We have $p'_{\ell'} = p_{\ell'}$ for all ${\ell'} \neq \ell$
(since only the values associated with elements of $P$
of the $\ell$th file are affected by $\phi_\ell$),
so we have $q'_{\ell'} = p'_\ell/p'_{\ell-1} = p_\ell/p_{\ell-1} = q_{\ell'}$ 
for all ${\ell'}$ other than $\ell$ and $\ell+1$.
The product $q'_1 \cdots q'_n$ telescopes to 1 as before.
The lemma asserts that $q'_\ell = q_{\ell+1}$ and $q'_{\ell+1} = q_\ell$.
It suffices to prove just one of the two assertions,
since the two previous sentences tell us
that $q'_\ell q'_{\ell+1} = q_\ell q_{\ell+1}$.
Expressed in terms of the $p_{\ell'}$'s, the assertion $q'_\ell = q_{\ell+1}$ 
amounts to the claim $p'_{\ell}/p'_{\ell-1} = p_{\ell+1}/p_{\ell}$,
or equivalently $p_{\ell} p'_{\ell} = p'_{\ell-1} p_{\ell+1}$,
which is just the claim $p_{\ell} p'_{\ell} = p_{\ell-1} p_{\ell+1}$.

We write $p_{\ell} p'_{\ell}$ as the product of
$f(x) f'(x)$ as $x$ varies over the $\ell$th file of $P$.
For each $x$ in the $\ell$th file of $P$, $f(x) f'(x) = L_x R_x$
where $L_x = \bigsum \{f(w): \ w \coveredby x\}$
and $R_x = \bigpsum \{f(y): \ y \covers x\}$ (with $w,y \in \hat{P}$).
Now we note a key property of the structure of $P = [a] \times [b]$:
if $x_+$ and $x_-$ are vertically adjacent elements of the $\ell$th file, 
with $x_+$ above $x_-$,
the $w$'s that contribute to $L_{x_+}$ are precisely
the $y$'s that contribute to $R_{x_-}$.
So, when we take the product of $f(x) f'(x) = L_x R_x$
over all $x$ in the $\ell$th file, the factors $L_{x_+}$ and $R_{x_-}$ combine
to give $\prod_w f(w)$
where $w$ varies over the (two) elements 
satisfying $x_- \coveredby w \coveredby x_+$
(here we are using the identity (\ref{eq:duality})).
The only factors that do not combine in this way are
$L_x$ where $x$ is the bottom element of the $\ell$th file
and $R_x$ where $x$ is the top element of the $\ell$th file.
Both of these factors can be written in the form $f(z)$
where $z$ is a single element of $\hat{P}$
belonging to either the $\ell-1$st or $\ell+1$st file.
By examining cases, it is easy to check
that every element of the $\ell-1$st file or $\ell+1$st file
makes a single multiplicative contribution,
so that $p_{\ell} p'_{\ell}$ is the product of $f(z)$
as $z$ varies over the union of the $\ell-1$st and $\ell+1$st files of $P$.
But this product is precisely $p_{\ell-1} p_{\ell+1}$.
So we have proved that $p_{\ell} p'_{\ell} = p_{\ell-1} p_{\ell+1}$, 
which concludes the proof.
\end{proof}

\begin{coro}
\label{cor:shift}
$Q(\proM f)$ is the leftward cyclic shift of $Q(f)$.
\end{coro}

\begin{proof}
Recall that $\proM$ is the composition
$\phi_{n-1}^* \circ \cdots \circ \phi_{1}^*$.
\end{proof}

\begin{proof}[Proof of Theorem~\ref{thm:prodhrefined}]
For each $\ell$ between 1 and $n$, view $q_\ell$ as a function of $f$.
Corollary~\ref{cor:shift} tells us that the numbers
$q_\ell(\proM^0 f), q_\ell(\proM^1 f), q_\ell(\proM^2 f), \dots, q_\ell(\proM^{n-1} f)$
are respectively equal to the numbers
$q_{\ell}(f),q_{\ell+1}(f), q_{\ell+2}(f), \dots, q_{\ell-1}(f)$, which multiply to 1.
Therefore $q_\ell$ (viewed as a function of $f$)
is multiplicatively homomesic under the action of $\proM$
(with average value 1 on all orbits), for all $1 \leq \ell \leq n$.
Hence the same is true of $p_1 = q_1$,
$p_2 = q_1 q_2$, $p_3 = q_1 q_2 q_3$, etc.,
so that for all $\ell \leq n-1$,
$p_\ell = |v|_\ell$ is multiplicatively 1-homomesic, as claimed.
\end{proof}

In fact, switching now to the general (not necessarily monic) case, we have:

\begin{theo}
\label{thm:prodrefined}
Fix $a,b \geq 1$, let $n=a+b$, let $P = [a] \times [b]$,
and for $1 \leq \ell \leq n-1$
let $|v|_\ell = \prod_x f(x)$
where $x$ ranges over all $(i,j) \in [a] \times [b]$
satisfying $j-i+a=\ell$.
Also take $\alpha,\omega \in \R^+$.
Then for every $v$ in $(\R^+)^P$
corresponding to an $f: \hat{P} \rightarrow \R^+$
with $f(\hat{0})=\alpha$ and $f(\hat{1})=\omega$,
$$\prod_{k=0}^{n-1} |\proB^k (v)|_\ell = 
\left\{ \begin{array}{ll}
\alpha^{b \ell} \omega^{a \ell} & \mbox{if $\ell \leq \min(a,b)$}, \\
\alpha^{a(n - \ell)} \omega^{a \ell} & \mbox{if $a \leq \ell \leq b$}, \\
\alpha^{b \ell} \omega^{b(n - \ell)} & \mbox{if $b \leq \ell \leq a$}, \\
\alpha^{a(n - \ell)} \omega^{b(n - \ell)} & \mbox{if $\ell \geq \max(a,b)$}. 
\end{array} \right.$$
Multiplying over $\ell$, we obtain
$$\prod_{k=0}^{n-1} |\proB^k (v)| = 
\alpha^{nab/2} \omega^{nab/2}.$$
\end{theo}

\begin{proof}[Proof of Theorem~\ref{thm:prodrefined}]
Suppose $a \geq b$ (the proof for the case $a \leq b$ is similar).
Define $\widetilde{f}$ by 
$\widetilde{f}(x) = f(x) / \alpha^{(n-m)/n} \omega^{m/n}$
for $x$ belonging to rank $m$.
Since the statistic $\widetilde{f} \mapsto |\widetilde{f}|_{\ell}$
is multiplicatively 1-mesic under the action of $\proM$,
and since the map $f \mapsto \widetilde{f}$
intertwines $\proM$ and $\proP$,
the statistic $f \mapsto |f|_{\ell}$
is multiplicatively $c_{\ell}$-mesic under the action of $\proM$,
where $c_{\ell}$ is the $n$th root
of the product of $\alpha^{n+1-i-j} \omega^{i+j-1}$
as $(i,j)$ ranges over $S_{\ell}$.
Hence the product in the formula above
is equal to $c_{\ell}^n$.
It is slightly tedious but not hard to check that
for $1 \leq \ell \leq b \leq a$ we have
$c_{\ell}^n = \alpha^{b \ell} \omega^{a \ell}$,
for $b \leq \ell \leq a$ we have
$c_{\ell}^n = \alpha^{b \ell} \omega^{b (n-\ell)}$,
and for $b \leq a \leq \ell \leq n-1$ we have
$c_{\ell}^n = \alpha^{a (n-\ell)} \omega^{b (n-\ell)}$.
We omit the proof of the aggregate formula
(obtained by multiplying over $\ell$).
\end{proof}

Note that Theorem~\ref{thm:prodrefined} 
immediately implies Theorem~\ref{thm:prodhrefined}
via the substitution $\alpha = \omega = 1$. 

Also note that, although the theorem asserts homomesy
only for $v$'s in $(\R^+)^P$,
the same holds for every $v$ in a dense open subset of $\R^P$,
whose complement consists of points
for which the orbit $v,\proB(v),\dots,\proB^{n-1}(v)$
is not well-defined because of some denominator vanishing.
The same is true for $\C^P$.

\end{section}

\begin{section}{Recombination}

\label{sec:recombine}


The ideas of this section apply in all three realms
(combinatorial, piecewise-linear, and birational)
and apply to many posets,
though we will restrict ourselves to the the case $P = [a] \times [b]$.
For the sake of generality, we treat the general birational case,
where $P$-arrays $f$ tacitly have
$f(\hat{0})=\alpha$ and $f(\hat{1})=\omega$.

The essential idea behind recombination is presented in Figure~\ref{fig:r2x2}).
This Figure shows (at top) the four elements 
of a particular $\rho_{\cP}$ orbit for $P = [2] \times [2]$
and (at bottom) the four elements 
of a particular $\pi_{\cP}$ orbit for $P = [2] \times [2]$,
with each element of the first orbit
being mapped via $\mathfrak{R}$ (the recombination operation)
to a corresponding element of the second orbit:
\begin{figure}[ht] 
\[
\xymatrixrowsep{0.3pc}\xymatrixcolsep{0.10pc}\xymatrix{
&          &.19\sh&     &     &           &.92\sh &   &     &           &.98\sh &   &     &           &.97\sh &   &     &    \\
|\!|\!\!: &.05\nh\sh &      & .11 & \nR &.90\nh \sh &       &.84& \nR &.89\nh \sh &       &.95& \nR &.16\nh \sh &       &.10& \nR & :\!\!|\!| \\
&          &.03\nh&     &     &           &.81\nh &   &     &           &.08\nh &   &     &           &.02\nh &   &     &
& & & & & & & & & & & & & & & & & \\
& & \jim & & & & \jim & & & & \jim & & & & \jim & & & \\
&          &.92\sh&     &     &           &.98\sh &   &     &           &.97\sh &   &     &           &.19\sh &   &     &    \\
|\!|\!\!: &.05\nh\sh &      &.84  & \nP &.90\nh \sh &       &.95& \nP &.89\nh \sh &       &.10& \nP &.16\nh \sh &       &.11& \nP & :\!\!|\!| \\
&          &.03\nh&     &     &           &.81\nh &   &     &           &.08\nh &   &     &           &.02\nh &   &     &
}
\]
\caption{Recombination in $[2] \times [2]$.}
\label{fig:r2x2}
\end{figure}
The sixteen numbers that appear in the $\rowP$ orbit
are the same as
the sixteen numbers that appear in the $\proP$ orbit,
in a different order.
Specifically, if $f$ denotes a $P$-array,
then the $P$-array $\recom f$ consists of
the bottom and left entries of $f$ along with
the right and top entries of $\rowP f$.

Recombination is implicit in the work of Striker and Williams~\cite{SW12};
they show (in the combinatorial realm)
that $\rho$ and $\pro$ are conjugate to one another.
However, the study of homomesies
requires that the Striker-Williams conjugation map
be expressed in a more explicit form, as is done here.
The discussion of recombination here is focused on the case $P = [a] \times [b]$,
but the notion applies more generally;
a version of it is discussed in~\cite{E+16} (see Section 6),
and Vorland's article~\cite{V18}
treats a multidimensional version suitable for rowmotion and promotion
in the case where $P$ is a product of more than two chains.

Note that it is not immediately obvious that recombination as defined above
carries the order polytope to itself,
or that it sends filters to filters and order ideals into order ideals.

\begin{figure}[ht] 
\[
\xymatrixrowsep{0.4pc}\xymatrixcolsep{0.40pc}\xymatrix{
&   &   &a_7\sh&   &   &   &   &   &b_7\sh&   &   \\
&   &a_4\nh\sh&   &a_8\sh&   &   &   &b_4\nh\sh&   &b_8\sh&   \\
f=  &a_1\nh\sh&   &a_5\nh\sh&   &a_9&\mR&b_1\nh\sh&   &b_5\nh\sh&   & b_9 \\
&   &a_2\nh\sh&   &a_6\nh&   &   &   &b_2\nh\sh&   &b_6\nh&   \\
&   &   &a_3\nh&   &   &   &   &   &b_3\nh&   &   \\
} 
\]
\[
\xymatrixrowsep{0.4pc}\xymatrixcolsep{0.40pc}\xymatrix{
&   &   &c_7\sh&   &   &   &   &   &d_7\sh&   &  \\
&   &c_4\nh\sh&   &c_8\sh&   &   &   &d_4\nh\sh&   &d_8\sh& \\
\ \mR \ &\ c_1\nh\sh&   &c_5\nh\sh&   &c_9&\mR&d_1\nh\sh&   &d_5\nh\sh&   &d_9 \\
&   &c_2\nh\sh&   &c_6\nh&   &   &   &d_2\nh\sh&   &d_6\nh&   \\
&   &   &c_3\nh&   &   &   &   &   &d_3\nh&   &   \\
}
\]
\caption{A partial orbit of $\rowB$ for $P=[3] \times [3]$.}
\label{fig:orbit}
\end{figure}
\begin{figure}[ht] 
\[
\xymatrixrowsep{0.4pc}\xymatrixcolsep{0.40pc}\xymatrix{
  &  &   &c_7\sh&   &   &   &   &   &d_7\sh&   &   \\
  &  &b_4\nh\sh&   &c_8\sh&   &   &   &c_4\nh\sh&   &d_8\sh&   \\
g=&a_1\nh\sh&  &b_5\nh\sh&   &c_9&\mP&b_1\nh\sh&   &c_5\nh\sh&   &d_9\\
  &  &a_2\nh\sh&   &b_6\nh&   &   &   &b_2\nh\sh&   &c_6\nh&   \\
  &  &   &a_3\nh&   &   &   &   &   &b_3\nh&   &
}
\]
\caption{Recombination.}
\label{fig:fibers}
\end{figure}
\begin{figure}[ht]
\[
\xymatrixrowsep{0.4pc}\xymatrixcolsep{0.40pc}\xymatrix{
&  &   &c_7\sh&   &   &   &   &   &d_7\sh&   &   \\
&  &c_4\nh\sh&   &c_8\sh&   &   &   &c_4\nh\sh&   &c_8\sh&   \\
&c_1\nh\sh&  &b_5\nh\sh&   &b_9&   &b_1\nh\sh&   &b_5\nh\sh&   &c_9\\
&  &b_2\nh\sh&   &b_6\nh&   &   &   &b_2\nh\sh&   &b_6\nh&   \\
&  &   &b_3\nh&   &   &   &   &   &a_3\nh&   &
}
\]
\caption{Key example: 
Toggling the middle entry of
$\tau_{(3,1)} \tau_{(2,3)} \tau_{(3,2)} \tau_{(3,3)} \rho_{\mathcal{B}}(f)$ (left) 
versus toggling the middle entry of
$\tau_{(3,3)} \tau_{(2,1)} \tau_{(3,2)} \tau_{(3,1)} g$ (right).}
\label{fig:key}
\end{figure}

The link between rowmotion and promotion
is seen mostly clearly if we look at the negative fibers.
It is helpful to consider the specific example $P=[3] \times [3]$,
where we implement rowmotion by toggling the $P$-array
row by row from top to bottom and within each row from left to right
(at $(3,3)$, $(3,2)$, $(2,3)$, $(3,1)$, $(2,2)$, $(1,3)$, 
$(2,1)$, $(1,2)$, and $(1,1)$, respectively),
and we implement promotion by toggling the $P$-array
file by file from left to right and within each file from top to bottom
(at $(3,1)$, $(3,2)$, $(2,1)$, $(3,3)$, $(2,2)$, $(1,1)$, 
$(2,3)$, $(1,2)$, and $(1,3)$, respectively).

Figure~\ref{fig:orbit} shows a partial orbit 
of $\rowB$ for $P=[3] \times [3]$, depicting a generic $P$-array $f$
along with the associated arrays $\rowB f$, $\rowB^2 f$, and $\rowB^3 f$.
Consider the array $g=\recom f$ formed by recombining
the bottom negative fiber of $f$ (consisting of $a_1,a_2,a_3$),
the middle negative fiber of $\rowB f$ (consisting of $b_4,b_5,b_6$),
and the top negative fiber of $\rowB^2 f$ (consisting of $c_7,c_8,c_9$),
as shown in the left frame of Figure~\ref{fig:fibers}.
We claim that $\proB g$ coincides with the array formed by recombining
the bottom negative fiber of $\rowB f$ (consisting of $b_1,b_2,b_3$),
the middle negative fiber of $\rowB^2 f$ (consisting of $c_4,c_5,c_6$),
and the top negative fiber of $\rowB^3 f$ (consisting of $d_7,d_8,d_9$),
as shown in the right frame of Figure~\ref{fig:fibers}.
For example, consider what happens when we compute 
the middle (i.e., $(2,2)$) entry of $\proB g$, 
assuming that our claim applies to the previously-computed entries.
The left panel of Figure~\ref{fig:key} shows the array 
four-ninths of the way through the process of applying
rowmotion to $\rowB(f)$ to compute $\rowB^2(f)$;
it consists of the five entries $b_5$, $b_9$, $b_2$, $b_6$, $b_3$ from $\rowB(f)$
(seen in the upper right panel of Figure~\ref{fig:orbit})
and the four entries $c_7$, $c_4$, $c_8$, $c_1$ from $\rowB^2(f)$
(seen in the lower left panel of Figure~\ref{fig:orbit}).
The right panel of Figure~\ref{fig:key} shows the array 
four-ninths of the way through the process of applying
promotion to $g$ to compute $\pi_\cB(g)$;
it consists of the five entries $b_5$, $a_3$, $c_8$, $b_6$, $c_9$ from $g$
(seen in the left panel of Figure~\ref{fig:fibers})
and the four entries $b_1$, $c_4$, $b_2$, $d_7$ from $\pi_\cB(g)$
(seen in the right panel of Figure~\ref{fig:fibers}).
In both cases, when one toggles the middle entry
and replaces $b_5$ by $c_5$,
with $c_5$ expressed in terms of previously-computed entries,
the governing relation is $b_5 c_5 = (b_2 + b_6)(c_4 \psum c_8)$.
(Indeed, the proof of Theorem~\ref{thm:delta} was found
by thinking hard about why the four entries
that adjoin $b_5$ are the same in both panels,
and generalizing.)

\begin{defi}
Given $a,b \geq 1$ and $P = [a] \times [b]$,
and given a $P$-array $f: P \rightarrow \R^+$,
let
\begin{equation} \label{eq:deltaij}
(\recom f)(i,j) = (\rowB^{j-1}f)(i,j).
\end{equation}
\end{defi}

\noindent
That is, $\recom f$ is the $P$-array
whose $b$ negative fibers, read from bottom to top,
have the same values as the corresponding negative fibers
in $f$, $\rowB f$, $\rowB^2 f$, \dots, $\rowB^{b-1} f$.
The operation $\recom$ can also be expressed as a product of toggles.
Specifically, for $1 \leq j \leq b$ let $T_j$ be the operation
$\tau_{(1,j)} \circ \tau_{(2,j)} \circ 
\cdots \circ \tau_{(a-1,j)} \circ \tau_{(a,j)}$
that toggles the elements of the $j$th negative fiber
from left to right (or equivalently from top to bottom),
and for $1 \leq j \leq b$ let $U_j$ be the operation
$T_{b-j+1} \circ T_{b-j+2} \circ \cdots \circ T_{b-1} \circ T_{b}$
that toggles the top $j$ negative fibers from top to bottom;
then $\recom = U_1 \circ U_2 \circ \cdots \circ U_{b-2} \circ U_{b-1}$.
It is no accident that a similar product of toggles
can be found in Theorem 5.4 of~\cite{SW12}.
Note that this representation of recombination as a composition of involutions
makes it clear that $\recom$ is invertible.
For a concrete formula for the inverse map, see Lemma~\ref{lem:inverse}.

Now we come to the main result of this section,
a birational analogue of Striker and Williams' main result.

\begin{theo}
\label{thm:delta}
Fix $a,b \geq 1$, let $n=a+b$, and let $P = [a] \times [b]$.
Then for all $f: P \rightarrow \R^+$, 
\begin{equation} \label{eq:recom}
\recom \rowB f = \proB \recom f.
\end{equation}
\end{theo}

\begin{proof}
We will show that
\begin{equation}
\label{eq:restate}
(\recom \rowB f)(i,j) = (\proB \recom f)(i,j)
\end{equation}
for all $(i,j) \in [a] \times [b]$ by left-to-right induction 
(starting with $(a,1)$ and ending with $(1,b)$).
The reader may find it helpful to consult Figure~\ref{fig:local})
which shows the vicinity of a typical element $x=(i,j)$
in the poset $[a] \times [b]$ (away from the boundary).
\begin{figure}[ht] 
\[
\xymatrixrowsep{1.5pc}\xymatrixcolsep{0.1pc}\xymatrix{
(i\!+\!1,j)\sh &       & (i,j\!+\!1) \\
        & (i,j)\nh\sh &         \\
(i,j\!-\!1)\nh &       & (i\!-\!1,j)
}
\]
\caption{The local picture.}
\label{fig:local}
\end{figure}
In the boundary cases where one or more of the ordered pairs
$(i+1,j),(i,j+1),(i,j-1),(i-1,j)$ does not belong to $[a] \times [b]$,
the out-of-bounds ordered pair(s) may be ignored.

Note that $(i,j-1)$ and $(i+1,j)$ are to the left of $(i,j)$.
If we now assume that (\ref{eq:restate}) holds for $(i,j-1)$ 
(which we take to be vacuously true if $(i,j-1) \not\in [a] \times [b]$)
and for $(i+1,j)$
(which we take to be vacuously true if $(i+1,j) \not\in [a] \times [b]$),
then 
\begin{equation}
\label{eq:iha}
(\recom \rowB f)(i,j-1) = (\proB \recom f)(i,j-1)
\end{equation}
and 
\begin{equation}
\label{eq:ihb}
(\recom \rowB f)(i+1,j) = (\proB \recom f)(i+1,j).
\end{equation}

Note that $\rowB$ can be described directly via the recurrence
\begin{equation}
\label{eq:rowrecur}
(\rowB f)(x) \ = \ \frac{1}{f(x)} \ \bigsum \{f(y): \: y \in x^-\}
\ \bigpsum \{(\rowB f)(y): \: y \in x^+\}
\end{equation}
for all $x \in P$,
where in lieu of including $\hat{0}$ and $\hat{1}$
we interpret $\bigsum$ and $\bigpsum$ of the empty set 
as $\alpha$ and $\omega$ respectively.
We rewrite (\ref{eq:rowrecur}) for $x=(i,j)$ as
\begin{equation}
\label{eq:rowij}
\begin{split}
(\rowB f)(i,j) \ = \ \frac{1}{f(i,j)} 
\ \times \ \bigsum \{f(i-1,j),f(i,j-1)\} \\
\times \ \bigpsum \{(\rowB f)(i+1,j),(\rowB f)(i,j+1)\},
\end{split}
\end{equation}
where terms $f(\cdot,\cdot)$ and $(\rowB f)(\cdot,\cdot)$
are to be ignored if the arguments 
do not belong to $[a] \times [b]$.
Likewise, promotion can be described by the recurrence
\begin{equation}
\begin{split}
\label{eq:proij}
(\proB f)(i,j) \ = \ \frac{1}{f(i,j)} 
\ \times \ \bigsum \{f(i-1,j),(\proB f)(i,j-1)\} \\
\times \ \bigpsum \{(\proB f)(i+1,j),f(i,j+1)\}.
\end{split}
\end{equation}

From (\ref{eq:deltaij}) we obtain
$(\recom f)(i,j+1) = (\rowB^{j} f)(i,j+1)$
and
$(\recom f)(i,j-1) = (\rowB^{j-2} f)(i,j-1)$,
which we will use presently.

One the one hand we have
\begin{eqnarray*}
(\recom \rowB f)(i,j) 
& = & (\rowB^{j-1} \rowB f)(i,j) 
\ \mbox{(by (\ref{eq:deltaij}))} \\
& = & (\rowB \rowB^{j-1} f)(i,j) \\
& = & \frac{1}{(\rowB^{j-1} f)(i,j)} 
\ \times \ \bigsum \{(\rowB^{j-1} f)(i-1,j),(\rowB^{j-1} f)(i,j-1)\} \\
& & \ \times \ \bigpsum 
    \{(\rowB \rowB^{j-1} f)(i+1,j),(\rowB \rowB^{j-1} f)(i,j+1)\} 
\ \mbox{(by (\ref{eq:rowij}))} \\ 
& = & \frac{1}{(\rowB^{j-1} f)(i,j)} 
\ \times \ \bigsum \{(\rowB^{j-1} f)(i-1,j),(\rowB^{j-1} f)(i,j-1)\} \\
& & \ \times \ \bigpsum \{(\rowB^{j} f)(i+1,j),(\rowB^{j} f)(i,j+1)\} .
\end{eqnarray*}
On the other hand we have
\begin{eqnarray*}
(\proB \recom f)(i,j) 
& = & \frac{1}{(\recom f)(i,j)} \textbf{}
\ \times \ \bigsum \{(\recom f)(i-1,j),(\proB \recom f)(i,j-1)\} \\
& & \ \times \bigpsum \{(\proB \recom f)(i+1,j),(\recom f)(i,j+1)\} 
\ \mbox{(by (\ref{eq:proij}))} \\
& = & \frac{1}{(\rowB^{j-1} f)(i,j)} 
\ \times \ \bigsum \{ (\rowB^{j-1} f)(i-1,j), (\proB \recom f)(i,j-1)\} \\
& & \ \times \bigpsum \{(\proB \recom f)(i+1,j), (\rowB^{j} f)(i,j+1) \} 
\ \mbox{(by (\ref{eq:deltaij}))} \\
& = & \frac{1}{(\rowB^{j-1} f)(i,j)} 
\ \times \ \bigsum \{ (\rowB^{j-1} f)(i-1,j), (\recom \rowB f)(i,j-1)\} \\
& & \ \times \bigpsum \{(\recom \rowB f)(i+1,j), (\rowB^{j} f)(i,j+1) \} 
\ \mbox{(by (\ref{eq:iha}) and (\ref{eq:ihb}))} \\ 
& = & \frac{1}{(\rowB^{j-1} f)(i,j)} 
\ \times \ \bigsum \{ (\rowB^{j-1} f)(i-1,j), (\rowB^{j-2} \rowB f)(i,j-1)\} \\
& & \ \times \bigpsum \{(\rowB^{j-1} \rowB f)(i+1,j), (\rowB^{j} f)(i,j+1) \} 
\ (\mbox{by (\ref{eq:deltaij})}) \\
& = & \frac{1}{(\rowB^{j-1} f)(i,j)} 
\ \times \ \bigsum \{ (\rowB^{j-1} f)(i-1,j), (\rowB^{j-1} f)(i,j-1)\} \\
& & \ \times \bigpsum \{(\rowB^{j} f)(i+1,j), (\rowB^{j} f)(i,j+1) \} .
\end{eqnarray*}
Comparing the final expressions in the two equation blocks,
we conclude that $(\recom \rowB f)(i,j) = (\proB \recom f)(i,j)$,
which was to be proved.
\end{proof}

\begin{coro} \label{cor:repeat}
For all $k \geq 0$,
\begin{equation}
\label{eq:deltarepeat}
\recom \rowB^k f = \proB^k \recom f
\end{equation}
\end{coro}

\begin{proof}
Immediate.
\end{proof}

\begin{coro} \label{cor:order}
The map $\proB$ (for $P = [a] \times [b]$)
is of order $a+b$.
\end{coro} 

\begin{proof}
This follows from Grinberg-Roby periodicity,
Corollary~\ref{cor:repeat},
and the fact that $\recom$ is bijective.
\end{proof}

\begin{lemma} \label{lem:inverse}
The map $\decom$ defined by
\begin{equation}
(\decom g)(i,j) = (\proB^{n+1-j} g)(i,j)
\end{equation}
satisfies $\decom \recom f = f$
and hence coincides with $\recom^{-1}$.
\end{lemma}

\begin{proof}
\begin{eqnarray*}
(\decom \recom f)(i,j) 
& = & (\proB^{n+1-j} (\recom f))(i,j) \\
& = & (\recom \rowB^{n+1-j} f)(i,j) \\
& & \ \ \ \ \mbox{(by Corollary~\ref{cor:repeat} with $k = n+1-j$)} \\
& = & (\rowB^{j-1} \rowB^{n+1-j} f)(i,j) \\
& & \ \ \ \ \mbox{(by (\ref{eq:deltaij}) applied to $\rowB^{n+1-j} f$)} \\
& = & (\rowB^n f)(i,j) 
\end{eqnarray*}
which equals $f(i,j)$ by the periodicity of $\rowB$.
\end{proof}

\begin{theo}
Fix $a,b \geq 1$, let $n=a+b$, and let $P = [a] \times [b]$.
For any statistic $F$ on $(\R^+)^P$
of the form $Ff = \prod_{x \in P} f(x)^{a_x}$
with $a_x \in \Z$ for all $x \in P$,
$$\prod_{k=0}^{n-1} F(\rowB^k f) = \prod_{k=0}^{n-1} F(\proB^k \recom f).$$
In particular, if $F$ is homomesic with respect to promotion,
$F$ is homomesic with respect to rowmotion.
\end{theo}

\begin{proof}
Recall that $\rowB^n$ and $\proB^n$ are both the identity map.
It will suffice to consider $F$ of the form
$F_x f = f(x)^{a_x}$ for some particular $x \in P$, say $x=(i,j)$.
Theorem~\ref{thm:delta} implies $\rowB^k f = \recom^{-1} \proB^k \recom f$.
Then setting $g_j = \proB^{n+1-j} \recom f$ we have
\begin{eqnarray*}
F_x (\rowB^k f) 
& = & ((\rowB^k f)(x))^{a_x} \\
& = & ((\recom^{-1} \proB^k \recom f)(x))^{a_x} \\
& = & ((\proB^{n+1-j} \proB^k \recom f)(x))^{a_x} 
\ \mbox{(by Lemma~\ref{lem:inverse})} \\
& = & ((\proB^k \proB^{n+1-j} \recom f)(x))^{a_x} \\
& = & F_x (\proB^k g_j)
\end{eqnarray*}
so that
\begin{eqnarray*}
\prod_{k=0}^{n-1} F_x (\rowB^k f) & = & \prod_{k=0}^{n-1} F_x (\proB^k g_j) \\
& = & \prod_{k=0}^{n-1} F_x (\proB^{k+n+1-j} \recom f) \\
& = & \prod_{k=0}^{n-1} F_x (\proB^k \recom f)
\end{eqnarray*}
(by reindexing and appealing to periodicity).
Since this holds for all $x$, 
the claim follows by multiplication.
\end{proof}

\end{section}

\begin{section}{Tropicalization}
\label{sec:tropic}

Theorem~\ref{thm:prodhrefined} is nothing more than 
a complicated identity involving the operations
of multiplication, division, addition, and parallel addition.
As such, Theorem~\ref{thm:prodhrefined} can be tropicalized to yield
an identity involving the operations
of addition, subtraction, min, and max.
The resulting identity is Theorem~\ref{thm:sumhrefined}.
Here we provide details of the tropicalization process.

\begin{lemma}
\label{lem:tropic}
Suppose $E_1(t_1,\dots,t_r)$ and $E_2(t_1,\dots,t_r)$ 
are subtraction-free rational functions,
expressed in terms of the number 1 
and the operations $+$, $\psum$, $\times$, and $/$,
such that $E_1(t_1,\dots,t_r) = E_2(t_1,\dots,t_r)$ 
for all $t_1,\dots,t_r$ in $\R^+$.
Let $e_i(t_1,\dots,t_r)$ \textup{(}for $1 \leq i \leq 2$\textup{)}
be the result of replacing 
$1$, $+$, $\psum$, $\times$, and $/$
by $0$, $\min$, $\max$, $+$, and $-$, respectively.
Then $e_1(t_1,\dots,t_r) = e_2(t_1,\dots,t_r)$ 
for all $t_1,\dots,t_r$ in $\R$.
\end{lemma}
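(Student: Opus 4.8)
The plan is to pass from the positive-orthant identity $E_1 = E_2$ to its tropicalization via a one-parameter scaling limit. First I would introduce, for a real parameter $\epsilon > 0$, the substitution $t_i \mapsto \epsilon^{-s_i}$ for arbitrary reals $s_1,\dots,s_r$, and track what happens to each basic operation as $\epsilon \to 0^+$. The key observation is that, for positive quantities behaving like $\epsilon^{-u}$ and $\epsilon^{-v}$ to leading order, we have $\epsilon^{-u} \cdot \epsilon^{-v} = \epsilon^{-(u+v)}$, $\epsilon^{-u} / \epsilon^{-v} = \epsilon^{-(u-v)}$, $\epsilon^{-u} + \epsilon^{-v} = \epsilon^{-\max(u,v)}(1 + o(1))$, and $\epsilon^{-u} \psum \epsilon^{-v} = \epsilon^{-u-v}/(\epsilon^{-u}+\epsilon^{-v}) = \epsilon^{-(u+v-\max(u,v))}(1+o(1)) = \epsilon^{-\min(u,v)}(1+o(1))$. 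Thus multiplication tropicalizes to $+$, division to $-$, series sum to $\max$, parallel sum to $\min$, and the constant $1 = \epsilon^0$ to $0$ — which is exactly the dictionary in the statement of the lemma.

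The implementation would proceed by structural induction on the parse tree of a subtraction-free expression $E$ built from $1$, the $t_i$, and the four operations. I would prove the claim that for every such $E$, if we set $t_i = \epsilon^{-s_i}$, then $E(\epsilon^{-s_1},\dots,\epsilon^{-s_r}) = \epsilon^{-e(s_1,\dots,s_r)}(1 + o(1))$ as $\epsilon \to 0^+$, where $e$ is the tropicalization of $E$. The base cases ($E = 1$ and $E = t_i$) are immediate, and the inductive step for each of the four operations is exactly the leading-order computation above, using that the product, quotient, and both sums of two expressions of the form $\epsilon^{-u}(1+o(1))$ again have this form with the exponent transforming correctly — here one needs that in a sum $\epsilon^{-u}(1+o(1)) + \epsilon^{-v}(1+o(1))$ the dominant term determines the exponent $\max(u,v)$, and when $u = v$ the two $(1+o(1))$ factors add to $2 + o(1) = \epsilon^{o(1)}$, which is still of the required form. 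Crucially, because every expression is subtraction-free and built from positive inputs, no cancellation of leading terms can occur, so the leading exponent is always governed purely by the combinatorics captured by $e$.

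Applying this to both $E_1$ and $E_2$: since $E_1(t) = E_2(t)$ as functions on $(\R^+)^r$, in particular $E_1(\epsilon^{-s_1},\dots) = E_2(\epsilon^{-s_1},\dots)$ for every $\epsilon>0$ and every fixed $s$, so $\epsilon^{-e_1(s)}(1+o(1)) = \epsilon^{-e_2(s)}(1+o(1))$; taking $\log$, dividing by $-\log\epsilon$, and letting $\epsilon \to 0^+$ forces $e_1(s) = e_2(s)$. This proves the identity for all $s \in \R^r$, which is the conclusion. The main obstacle, and the only place requiring genuine care, is making the induction clean in the presence of the subtlety that a subtraction-free rational \emph{function} may have several syntactically distinct representations; one must either fix a particular parenthesized expression (the lemma as stated supplies $E_1, E_2$ as given expressions, so this is fine) or else check that the tropicalization is independent of the representation — but since we only need the identity $e_1 = e_2$ for the \emph{specific} expressions arising from tropicalizing Theorem~\ref{thm:prod}, choosing a fixed representative suffices and the well-definedness question can be sidestepped. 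A secondary technical point is that $o(1)$ bounds must be made uniform enough to survive the four arithmetic operations; since we work at a fixed $s$ and let $\epsilon\to 0$, and there are only finitely many operations in the parse tree, the asymptotics compose without difficulty.
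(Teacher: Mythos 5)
Your approach is the same as the paper's: the paper's entire proof is the one-line formula $e_i(t_1,\dots,t_m) = -\lim_{N\to\infty}\frac1N\log E_i(e^{-Nt_1},\dots,e^{-Nt_m})$, and your structural induction on the parse tree (with the observation that subtraction-freeness prevents cancellation of leading terms) is exactly the justification that formula needs, so your writeup is if anything more complete than the paper's.

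There is, however, a sign-convention error you need to fix. With your substitution $t_i\mapsto\epsilon^{-s_i}$ and $\epsilon\to0^+$, your own computations show that series addition tropicalizes to $\max$ and parallel addition to $\min$ (the summand with the \emph{larger} exponent $u$ dominates in $\epsilon^{-u}+\epsilon^{-v}$). You then assert this ``is exactly the dictionary in the statement of the lemma,'' but the lemma's dictionary is the opposite: $+$ must become $\min$ and $\psum$ must become $\max$. So as written your induction proves the series--parallel dual of the stated lemma. The repair is trivial and cosmetic: either substitute $t_i\mapsto\epsilon^{+s_i}$ (equivalently $e^{-Ns_i}$ with $N\to\infty$, which is the paper's normalization), under which $\epsilon^{u}+\epsilon^{v}=\epsilon^{\min(u,v)}(1+o(1))$ and $\epsilon^{u}\psum\epsilon^{v}=\epsilon^{\max(u,v)}(1+o(1))$ as required; or keep your substitution and observe that the dual identity at the point $(-s_1,\dots,-s_r)$, negated, yields the stated identity at $(s_1,\dots,s_r)$. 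Everything else --- the base cases, the handling of $u=v$ in a sum, the uniformity of the $o(1)$'s over a finite parse tree, and the remark that fixing a syntactic representative of each $E_i$ sidesteps well-definedness of tropicalization --- is correct.
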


\begin{proof}
Write $e_i(t_1,\dots,t_m) = -\lim_{N \rightarrow \infty}
\frac{1}{N} \log E_i(e^{-Nt_1},\dots,e^{-Nt_m})$.\footnote{We are indebted 
to Colin McQuillan and Will Sawin for clarifying this point; see~\cite{MO13}.}
\end{proof}

It is worth observing that the converse of the lemma is false.
That is, there are tropical identities like
$\max(t,t)=t$ whose ``detropicalization'' is false.
Thus, while Theorem~\ref{thm:sumhrefined} 
is a consequence of Theorem~\ref{thm:prodhrefined},
Theorem~\ref{thm:prodhrefined} cannot be derived 
as a consequence of Theorem~\ref{thm:sumhrefined},
at least not using any methods we are aware of.

At the same time, we should mention that in a certain sense,
Theorem~\ref{thm:sumhrefined} can be proved 
without relying on Theorem~\ref{thm:prodhrefined}.
Specifically, one can tropicalize each of the steps 
in the proof of Theorem~\ref{thm:prodhrefined},
using (for instance) the identity
\begin{equation}
\min(x,y)+\max(x,y) = x+y
\end{equation}
in place of the identity (\ref{eq:duality}).

\begin{proof}[Proof of Theorem~\ref{thm:sumhrefined}]
This is an immediate consequence of Theorem~\ref{thm:prodhrefined}
via tropicalization.
\end{proof}

\begin{proof}[Proof of Theorem~\ref{thm:sumrefined}]
Putting $\alpha=1$ in Theorem~\ref{thm:prodrefined},
we get
$$\prod_{k=0}^{n-1} |\proB^k (v)|_\ell = 
\left\{ \begin{array}{ll}
\omega^{a \ell} & \mbox{if $1 \leq \ell \leq b$}, \\
\omega^{b(n - \ell)} & \mbox{if $b \leq \ell \leq n$}.
\end{array} \right.$$
Tropicalizing these result gives Theorem~\ref{thm:sumrefined}.
\end{proof}

\begin{proof}[Proof of Theorem~\ref{thm:cardrefined}]
Putting $\omega=1$ in Theorem~\ref{thm:prodrefined},
we get
$$\prod_{k=0}^{n-1} |\proB^k (v)|_\ell = 
\left\{ \begin{array}{ll}
\alpha^{b \ell} & \mbox{if $\ell \leq a$}, \\
\alpha^{a(n - \ell)} & \mbox{if $a \leq \ell n$}.
\end{array} \right.$$
Tropicalizing this result gives a homomesy result
for the promotion on the reverse order polytope
(the set of order-reversing maps 
from $[a] \times [b]$ to $[0,1]$).
Specializing to the vertices of this polytope
gives the desired homomesy for order ideals.
\end{proof}

\end{section}

\begin{section}{Other homomesies} 
\label{sec:opposites}

Recall that $P_i$ ($1 \leq i \leq n-1$)
denotes the $i$th file of $P = [a] \times [b]$.
We have shown that for $P = [a] \times [b]$,
the functions $$p_i: f \mapsto \sum_{\mbox{$x \in P_i$}} f(x)$$
(and their analogues in the birational setting)
are homomesic under the action of rowmotion and promotion.
These are not, however, the only combinations of
the local evaluation operations $f \mapsto f(x)$ ($x \in P$)
that exhibit homomesy.
Specifically, we now show that for all $a,b$,
the functions $f \mapsto f(x) + f(x')$ are homomesic
under rowmotion and promotion,
where $x=(i,j)$ and $x'=(i',j')$ are opposite elements of $P$,
that is, $i+i'=a+1$ and $j+j'=b+1$.
We will prove the birational version,
since the PL version is an easy consequence.
In the birational setting, the statistic is $f \mapsto f(x) f(x')$.

\begin{theo}
Fix $a,b \geq 1$, let $n=a+b$, and let $P = [a] \times [b]$.
Fix $(i,j),(i',j')$ in $P$ satisfying $i+i'=a+1$ and $j+j'=b+1$.
For $f:P \rightarrow \R^+$ let $F(f) = f(i,j) f(i',j')$.
Then $F$ is multiplicatively $\alpha \omega$-mesic under $\rowB$.
\end{theo}

\begin{proof}
Theorem 32 in~\cite{GR15} says that $f(i',j') f'(i,j) = \alpha \omega$
with $f' = \rowB^{i+j-1} f$, so 
\begin{eqnarray*}
(\alpha \omega)^n
& = & \prod_{k=1}^n (\rowB^k f)(i',j') (\rowB^k f')(i,j) \\
& = & \prod_{k=1}^n (\rowB^k f)(i',j') \ \prod_{k=1}^n (\rowB^k f')(i,j) \\
& = & \prod_{k=1}^n (\rowB^k f)(i',j') \ \prod_{k=1}^n (\rowB^k f)(i,j) \\
&   & \mbox{(since $\rowB$ has period $n$)} \\
& = & \prod_{k=1}^n F (\rowB^k f)
\end{eqnarray*}
as claimed.
\end{proof}

\end{section}

\longthanks{The authors are grateful to Arkady Berenstein, Darij Grinberg, 
Michael Joseph, Tom Roby, Richard Stanley, and Jessica Striker 
for helpful conversations and detailed comments on the manuscript.
The two anonymous referees also had many good suggestions.}

\nocite{*}
\bibliographystyle{amsplain-ac}
\bibliography{ep-may2020}
\end{document}